\newtheorem{thm}{Theorem}
\newtheorem{theorem}{Theorem}[section]
\newtheorem{corollary}[theorem]{Corollary}
\newtheorem{lemma}[theorem]{Lemma}
\def\irr#1{{\rm  Irr}(#1)}
\def\ker#1{{\rm ker} (#1)}
\begin{document}

\title[Camina $p$-groups]{Centralizers of Camina $p$-groups of nilpotence class $3$}
\author[Mark L. Lewis]{Mark L. Lewis}

\address{Department of Mathematical Sciences, Kent State University, Kent, OH 44242}
\email{lewis@math.kent.edu}

\subjclass[2010]{20D15, Secondary: 20C15}
\keywords{Camina $p$-groups, nilpotence class $3$, center of a group }

\begin{abstract}
Let $G$ be a Camina $p$-group of nilpotence class $3$.  We prove that if $G' < C_G (G')$, then $|Z(G)| \le |G':G_3|^{1/2}$.  We also prove that if $G/G_3$ has only one or two abelian subgroups of order $|G:G'|$, then $G' < C_G (G')$.  If $G/G_3$ has $p^a + 1$ abelian subgroups of order $|G:G'|$, then either $G' < C_G (G')$ or $|Z(G)| \le p^{2a}$.
\end{abstract}

\maketitle

\section{Introduction}

Throughout this note, all groups are finite.  A group $G$ is called a {\it Camina group} if the conjugacy class of $g$ is $gG'$ for every element $g \in G \setminus G'$.


Dark and Scoppola have proved in \cite{DaSc} that if $G$ is a Camina group then either $G$ is a Frobenius group  or $G$ is a $p$-group that is a Camina group for some prime $p$.  We say a group is a Camina $p$-group if it is both a Camina group and a $p$-group.
Dark and Scoppola also proved in \cite{DaSc} that if $G$ is a Camina $p$-group, then $G$ has nilpotence class at most $3$.



MacDonald has proved in Theorem 3.1 of \cite{more} that if $G$ is a Camina $2$-group, then $G$ has nilpotence class $2$, so Camina $p$-groups of class $3$ only occur when $p$ is odd.  MacDonald presented examples of Camina $p$-groups of nilpotence class $3$ for every prime $p$ in Theorem 6.3 of \cite{some}.  Other examples have appeared in Theorem 5.1 of \cite{more}, Theorem 2.12 of \cite{MaSc}, and Section 4 of \cite{DaSc}.





In Lemma 2.1 of \cite{more}, MacDonald gave a sketch of a proof that if $G$ is a Camina group of nilpotence class $3$, then $|G_3| \le |G':G_3|$ where $G_3 = [G',G]$.  However, Dark and Scoppolla pointed out in \cite{DaSc} that there is a flaw in MacDonald's argument.  The best result that they could prove is $|G_3| < |G':G_3|^{3/2}$ (see Lemma 5 of \cite{DaSc}).

Our student, Nabil Mlaiki, included in his dissertation \cite{Mlaiki} an argument that also tried to prove MacDonald's claim.  Unfortunately, this argument relied on the assertion that if $G$ is a Camina $p$-group of nilpotence class $3$, then $C_G (G') > G'$.  While it is true that if $G$ is a Camina $p$-group of nilpotence class $3$ and $|G_3| = p$, then $C_G (G') > G'$, this is not true for all Camina $p$-groups of nilpotence class $3$.  In fact, for every odd prime $p$, there exists a Camina $p$-group of nilpotence class $3$ so that $C_G (G') = G'$.  Thus, this argument is fatally flawed.

On the other hand, it is not difficult to see that the argument mentioned in the previous paragraph is valid when one makes the additional assumption that $C_G (G') > G'$.  In particular, if $G$ is a Camina group of nilpotence class $3$ and $C_G (G') > G'$, then $|G_3| \le |G':G_3|$.  In fact, we are now able to prove a stronger result.

\begin{thm} \label{mainfour}
If $G$ is a Camina $p$-group of nilpotence class $3$ and $G' < C_G (G')$, then $|G_3| \le |G':G_3|^{1/2}$.
\end{thm}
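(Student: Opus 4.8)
The plan is to translate the group-theoretic hypotheses into the language of bilinear forms and to locate precisely where the assumption $G' < C_G(G')$ is used. Write $C = C_G(G')$ and recall that in a class-$3$ group $G'$ is abelian and $G_3 = [G',G] \le Z(G) \le G'$; moreover the sections $G/G'$, $G'/G_3$ and $G_3$ are elementary abelian, so I may regard them as $\mathbb{F}_p$-spaces. I would first record the two commutator maps: the alternating form $\beta\colon G/G' \times G/G' \to G'/G_3$ given by $\beta(\bar x,\bar y) = [x,y]G_3$, and the pairing $\phi\colon G/G' \times G'/G_3 \to G_3$ given by $\phi(\bar x, \bar w) = [x,w]$. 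Using $G_3 \le Z(G)$ and $[G',G'] = 1$ one checks both are well defined and biadditive. The Camina hypothesis enters as a surjectivity statement: for every $\bar x \ne 0$ the map $\beta(\bar x, -)\colon G/G' \to G'/G_3$ is onto. I would also identify the radicals of $\phi$: its left radical is $C/G'$ and its right radical is $Z(G)/G_3$, so $\phi$ descends to a nondegenerate pairing $G/C \times G'/Z(G) \to G_3$.

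Second, I would bring in the Hall--Witt (Jacobi) identity, which in a class-$3$ group yields the cyclic relation $[[x,y],z]\,[[y,z],x]\,[[z,x],y] = 1$ and realizes the triple commutator $T(\bar x,\bar y,\bar z) = [[x,y],z] = \phi(\bar z, \beta(\bar x,\bar y))$ as a trilinear map $G/G' \times G/G' \times G/G' \to G_3$. This is where the hypothesis $G' < C$ is decisive: choosing $x \in C \setminus G'$, the element $x$ centralizes $G'$, so the term $[[y,z],x]$ vanishes and the relation collapses to $[[x,y],z] = [[x,z],y]$. Hence, for each such $x$, the form $B_x(\bar y,\bar z) := \phi(\bar z,\beta(\bar x,\bar y))$ on $G/G'$ is symmetric. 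Its radical is the kernel $K_x$ of $\beta(\bar x,-)$, which has codimension $\dim(G'/G_3)$, so $B_x$ descends to a nondegenerate symmetric $G_3$-valued form on a space isomorphic to $G'/G_3$.

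Third, the bound must be extracted from this family of symmetric forms. The earlier (MacDonald-type) argument produces only $|G_3| \le |G':G_3|$ from nondegeneracy alone; the gain of a square root should come from the symmetry just established, which is an order-$2$ phenomenon. The plan is to convert it into an isotropy statement: to build from $\phi$ a nondegenerate alternating form on $G'/G_3$ (forcing $\dim(G'/G_3)$ to be even) for which the image of $G_3$ under a natural embedding into $G'/G_3$ is totally isotropic, so that $\dim G_3 \le \tfrac12 \dim(G'/G_3)$, which is exactly $|G_3| \le |G':G_3|^{1/2}$. Concretely I would try to manufacture both the embedding and the form from the composite $\phi \circ (\beta(\bar x,-)^{-1} \times \mathrm{id})$ for a fixed $x \in C\setminus G'$, together with the symmetry of $B_x$, and verify nondegeneracy using that $\phi$ is nondegenerate on $G/C \times G'/Z(G)$.

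I expect this last step to be the main obstacle. A single nondegenerate symmetric $G_3$-valued form on $G'/G_3$ does not by itself force $2\dim G_3 \le \dim(G'/G_3)$ (this already fails when $\dim(G'/G_3) = \dim G_3 = 2$), so the argument must genuinely exploit either the compatibility of all the forms $B_x$ with the one fixed pairing $\phi$, or the interplay between $\beta$ and $\phi$ through $T$. The delicate points will be to pin down the correct alternating form and the correct isotropic copy of $G_3$, and to check that the nondegeneracy of $\phi$ survives the passage to these forms; reconciling the many symmetric forms that arise when $\dim(C/G') > 1$ is where I anticipate the real work.
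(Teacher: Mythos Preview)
Your step~3 is left as a plan rather than a proof, and as you yourself note, a single nondegenerate symmetric $G_3$-valued form on a space of dimension $\dim(G'/G_3)$ does not by itself force $2\dim G_3 \le \dim(G'/G_3)$; nor is it clear how to assemble the family $\{B_x\}_{x\in C\setminus G'}$ into the alternating form and isotropic embedding you sketch. So there is a genuine gap: the idea that produces the factor $1/2$ is missing.

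The paper sidesteps this entirely by working not with the triple-commutator forms $B_x$ on $G/G'$ but with the ordinary commutator restricted to $C=C_G(G')$. One first records that $C/G_3$ is abelian (Lemma~\ref{fixthree}) and $|C:G'|=|G':G_3|$ (Corollary~\ref{vanone}), and that $A(a)=C$ for every $a\in C\setminus G'$; then Lemma~\ref{eight} gives $[a,C]=G_3$ for each such $a$. Hence $C'=G_3$ and $Z(C)=G'$, so every $a\in C\setminus Z(C)$ has conjugacy class $aC'$: in the language of \cite{GCG}, $C$ is a VZ-group. The desired inequality is then the known bound $|C'|\le|C:Z(C)|^{1/2}$ for such groups (Lemma~2.4 of \cite{GCG}), which here reads $|G_3|\le|G':G_3|^{1/2}$.

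Translated into your bilinear language, the relevant form is the \emph{alternating} commutator map $C/G'\times C/G'\to G_3$ (well defined since $G'=Z(C)$ and $C$ has class~$2$), not the symmetric forms $B_x$ on $G/G'$. The Camina condition descends to the statement that every nonzero left slice of this map is onto $G_3$; equivalently, every nonzero linear functional on $G_3$ yields a nondegenerate alternating scalar form on $C/G'$. That is exactly the semi-extraspecial/VZ situation in which the square-root bound is already available, so no ad hoc isotropic embedding is needed. Your Hall--Witt symmetry observation for $B_x$ is correct, but it is a detour: the factor $1/2$ lives in the alternating structure on $C/G'$, and it surfaces once you recognise $C$ itself as a Camina-type group of class~$2$.
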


Thus, for Camina groups with nilpotence class $3$ and $C_G (G') > G'$, we are able to obtain a bound that is stronger than MacDonald's original bound.  We note that $|G':G_3|$ is a square, so $|G':G_3|^{1/2}$ is necessarily an integer.


We now remove the hypothesis that $G' < C_G (G')$.  Unfortunately, at this time to obtain additional results bounding $|G_3|$, we need to add another hypothesis.  In particular, we add a hypothesis on the number of abelian subgroups of $G/G_3$ that have order $|G:G'|$.

\begin{thm} \label{mainfive}
Let $G$ be a Camina $p$-group of nilpotence class $3$, and let $H = G/G_3$.
\begin{enumerate}
\item If $H$ has one or two abelian subgroups of order $|H:H'|$, then $C_G (G') > G'$.
\item If $H$ has $p^a+1$ abelian subgroups of order $|H:H'|$ for the positive integer $a$, then either $C_G (G') > G'$ or $|G_3| \le p^{2a}$.
\end{enumerate}
\end{thm}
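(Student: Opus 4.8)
The plan is to translate everything into the elementary abelian sections of $G$ and to reduce the theorem to a single counting statement about an alternating form. First I would record the structural backbone. Since $G$ has class $3$, the standard inclusion $[G_i,G_j]\le G_{i+j}$ gives $[G',G']\le G_4=1$, so $G'$ is abelian and hence $G'\le C_G(G')$ always; consequently $C_G(G')>G'$ is equivalent to the existence of an element of $G\setminus G'$ centralizing $G'$. Writing $V=G/G'$, $W=G'/G_3$ and $X=G_3$ (all elementary abelian by the structure theory of Camina $p$-groups of class $3$, so all $\mathbb{F}_p$-spaces), the commutator operation induces an alternating $\mathbb{F}_p$-bilinear form $\beta\colon V\times V\to W$ from $[G,G]$ and a bilinear map $\gamma\colon V\times W\to X$ from $[G,G']$, linked by the Hall--Witt identity, which in the associated Lie ring reads as the Jacobi relation $\gamma(c,\beta(a,b))+\gamma(a,\beta(b,c))+\gamma(b,\beta(c,a))=0$. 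The Camina hypothesis says exactly that $\beta(v,\cdot)\colon V\to W$ is surjective for every nonzero $v$. Because $G'$ is abelian, $\gamma$ is well defined on $V\times W$, and I would check that $C_G(G')/G'$ is precisely the left radical $V_0=\{v\in V:\gamma(v,W)=0\}$, so that the theorem becomes a statement about when $V_0\neq 0$.

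Next I would set up the dictionary between the abelian subgroups of $H=G/G_3$ of order $|H:H'|$ and distinguished isotropic subspaces of $\beta$. Since a central element of $H$ outside $H'$ would be its own class, forcing $H'=1$, one has $Z(H)=H'$, so every such abelian subgroup contains $H'$; and if $A$ is abelian with $|A|=|H:H'|$, then $\bar A=A/H'$ is totally isotropic with $|\bar A|=|H:H'|/|H'|$. The Camina condition forces $\dim\ker\beta(u,\cdot)=\dim V-\dim W$ with $u$ in this kernel, so every totally isotropic subspace has dimension at most $\dim V-\dim W$, and in the extremal case $\bar A=\bar A^{\perp}$. Thus the abelian subgroups of $H$ of order $|H:H'|$ correspond bijectively to the subspaces $U\le V$ with $U=U^{\perp}$; counting them is the same as counting these \emph{Lagrangians}.

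With this dictionary, both parts reduce to the case $V_0=0$, i.e. $C_G(G')=G'$. The core claim I would establish is that in this case the left-nondegeneracy of $\gamma$, fed through the Jacobi relation, endows the sections with the structure of a vector space over a field $\mathbb{F}_q$ with $q=p^{a}$ and $a\ge 1$: concretely, $W$ becomes a $1$-dimensional $\mathbb{F}_q$-space, $\beta$ becomes a scalar multiple of an $\mathbb{F}_q$-bilinear alternating form on the $\mathbb{F}_q$-space $V$, and the Lagrangians become exactly the maximal $\mathbb{F}_q$-isotropic subspaces. Their number is then $\prod_{i=1}^{k}(q^{i}+1)$ with $2k=\dim_{\mathbb{F}_q}V$, which is at least $q+1\ge p+1>2$; this already yields part (1), since having only one or two such subgroups is incompatible with $V_0=0$ and hence forces $C_G(G')>G'$. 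For part (2), the hypothesis of exactly $p^{a}+1$ such subgroups forces $k=1$, so $V\cong\mathbb{F}_q^{2}$ with $q=p^{a}$; then $G_3$ is spanned over $\mathbb{F}_q$ by the image of the $\mathbb{F}_q$-bilinear map $\gamma$, whose source $V\times W$ has $\mathbb{F}_q$-dimension $2$ in the first variable and $1$ in the second, giving $\dim_{\mathbb{F}_q}G_3\le 2$ and hence $|G_3|\le q^{2}=p^{2a}$ (the remaining possibility $V_0\neq 0$ being the first alternative in the conclusion).

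The main obstacle is exactly the emergence of the field $\mathbb{F}_q$. I would fix one Lagrangian and parametrize the Lagrangians transverse to it by the graphs of suitable self-adjoint maps; the Jacobi relation should endow this set of ``slopes'' with a multiplication, and the hypothesis $V_0=0$ is what must force every nonzero slope to be invertible, so that the slopes form a finite division ring, hence (by Wedderburn) a field $\mathbb{F}_q$. This is where the interplay of $\beta$ and $\gamma$ through Jacobi has to be used most carefully, and where the Camina surjectivity of $\beta$ is essential to rule out proper invariant subspaces and to pin the count down to the exact value $p^{a}+1$ rather than a mere congruence modulo $p$. Everything else---the reduction to $V_0$, the Lagrangian dictionary, and the deduction of the two numerical conclusions---should be routine linear algebra once this structural heart is in place.
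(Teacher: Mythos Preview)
Your route is genuinely different from the paper's, and its decisive step is not established. The paper never constructs a field extension. Instead it introduces the family $\mathcal{C}(G)=\{C(N):N\le G_3,\ |G_3:N|=p\}$, where $C(N)/N=C_{G/N}(G'/N)$, and shows $\mathcal{C}(G)\subseteq\mathcal{A}^*(G)$ (your set of Lagrangians). The heart of the argument is an elementary count: when $C_G(G')=G'$, the commutator subgroups $[C,G']$ for $C\in\mathcal{C}(G)$ are proper nontrivial subgroups of $G_3$, any two of them generate $G_3$, and every index-$p$ subgroup of $G_3$ contains one of them; a dual partition lemma on the elementary abelian group $G_3$ then yields $|\mathcal{C}(G)|\ge p^{\lfloor m/2\rfloor}+1$ where $|G_3|=p^m$. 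Both parts of the theorem follow by contraposition from $|\mathcal{C}(G)|\le|\mathcal{A}^*(G)|$ (using $p$ odd for part~(1)).

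Your argument, by contrast, hinges entirely on the assertion that $V_0=0$ forces an $\mathbb{F}_q$-structure under which $W$ becomes one-dimensional, $\beta$ becomes $\mathbb{F}_q$-bilinear, $X=G_3$ acquires an $\mathbb{F}_q$-action, and $\gamma$ respects it. You yourself flag this as ``the main obstacle,'' and the sketch you offer---parametrize Lagrangians transverse to a fixed one by ``slopes,'' use Jacobi to multiply slopes, invoke Wedderburn---does not explain why $V_0=0$ makes every nonzero slope invertible, nor why the resulting ring acts on $W$ at all, let alone compatibly on $X$ and through $\gamma$ (the bound $\dim_{\mathbb{F}_q}G_3\le 2$ needs the latter). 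Note also the strength of what you would be proving: if $W$ is one-dimensional over $\mathbb{F}_q$ then $q=|W|=p^n$, so your claim entails that $C_G(G')=G'$ forces $G/G_3$ to be the Heisenberg group with exactly $p^n+1$ Lagrangians. The paper neither asserts nor needs anything this strong; its counting argument is indifferent to whether $a<n$ or $a=n$. As written, the field emergence is an aspiration rather than a lemma, so the proposal has a genuine gap at its center.
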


We will see when $G$ is a Camina $p$-group of nilpotence class $3$ that $|G:G'|$ is the maximum size allowed for an abelian subgroup of $G/G_3$.  We will also see that $G/G_3$ is guaranteed to have an abelian subgroup of this size, and in fact, we will use a result of Verardi to see that $G/G_3$ will have either one or two abelian subgroup of this size or it will have $p^a + 1$ abelian subgroups of this size where $a$ is a positive divisor of $n$ and $n$ is defined by $|G:G'| = p^{2n}$.  Notice that when $a < n/2$, Theorem \ref{mainfive} will give a bound that is stronger than MacDonald's bound.

\section{Preliminaries} \label{prelim}

We begin by reviewing some of the known results regarding Camina $p$-groups.  It is not difficult to show that if $G$ is a Camina group and $N$ is a normal subgroup of $G$ that does not contain $G'$, then $G/N$ is a Camina group.  Thus, if $G$ is a Camina $p$-group of nilpotence class $2$, then $G/N$ is a Camina group for every subgroup $N \le Z(G) = G'$ with $|G':N| = p$.  Notice that this implies that $G/N$ will be an extra-special $p$-group.  In fact, Camina $p$-groups of nilpotence class $2$ with $|G'| = p$ are extra-special.  Hence, we can restate the previous fact as saying that if $G$ is a Camina $p$-group with nilpotence class $2$ and $N$ has index $p$ in $Z(G)$, then $G/N$ is extraspecial.  Beisiegel in \cite{beis} has defined a $p$-group $G$ to be {\it semiextraspecial} if $G/N$ is extraspecial for every subgroup $N$ having index $p$ in $Z(G)$.  Verardi proved in Theorem 1.2 of \cite{ver} that every semiextraspecial $p$-group is a Camina $p$-group, and Verardi has extensively studied such groups in \cite{ver}.  This proves that a $p$-group is semiextraspecial if and only if it is a Camina $p$-group of nilpotence class $2$.

In general, a semiextraspecial group $G$ satisfies $|G'|^2 \le |G:G'|$.  (See either Theorem 3.2 of \cite{some} or Theorem 1.5 in \cite{ver}.)  Groups where this maximum is obtained are called {\it ultraspecial}.  Thus, $G$ is an ultraspecial group if $G$ is a semiextraspecial group and $|G:G'| = |G'|^2$. If $G$ is a semiextraspecial group, then the maximum possible order of an abelian subgroup is $|G'| |G:G'|^{1/2}$ (this is Theorem 1.8 of \cite{ver}).  Thus, in an ultraspecial group, the maximum possible order of an abelian subgroup is $|G'|^2 = |G:G'|$.

At the end of Section 3 of \cite{ver}, Verardi constructed an ultraspecial group $K$ that has no abelian subgroups of order $|K:K'|$.  At the beginning of Section 4 of \cite{ver}, Verardi presented an ultraspecial group $K$ with exactly one abelian subgroup of order $|K:K'|$.  In Example 3.9 (b) of \cite{ver}, Verardi shows that there is an infinite family of ultraspecial groups $K$ with exactly two abelian subgroups of order $|K:K'|$.  In Corollary 5.9 of \cite{ver}, Verardi proved that if $K$ is an ultraspecial group with more than two abelian subgroups of order $|K:K'|$, then the number of abelian subgroups of order $|K:K'|$ is $p^h + 1$ where $h$ is a positive integer that divides $n$ and $n$ is defined by $|K:K'| = p^{2n}$.  In Theorem 5.10 of \cite{ver}, it is proved that if $K$ is an ultraspecial group with $|K:K'| = p^{2n}$ and $K$ has $p^n + 1$ abelian subgroups of order $|K:K'|$, then $K$ is isomorphic to a Sylow $p$-subgroup of ${\rm SL}_3 (p^n)$.  We note that a Sylow $p$-subgroup of ${\rm SL}_3 (p^n)$ is the group known as the {\it Heisenberg group} of degree $p^n$.  In summary, an ultraspecial group $K$ has either zero, one, two, or $p^h + 1$ abelian subgroups of order $|K:K'| = p^{2n}$ where $h$ is a divisor of $n$ and $h = n$ if and only if $K$ is the Heisenberg group.

In this paper, we focus on the case where $G$ is a Camina $p$-group of nilpotence class $3$.  This first result was proved in Theorem 5.2 of \cite{some}.

\begin{lemma}\label{basic1}
Let $G$ be a Camina $p$-group of nilpotence class $3$.  Then $G/G_3$ is an ultraspecial group and $|G':G_3|$ is a square.
\end{lemma}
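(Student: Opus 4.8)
The plan is to first pin down the coarse structure forced by class $3$, then realise $G/G_3$ as a Camina group of class $2$ in order to invoke the semiextraspecial theory, and finally extract the extra rigidity coming from the fact that $G$ genuinely has class $3$. First I would record the elementary consequences of $G$ having class exactly $3$: since $[G',G']\le \gamma_4(G)=1$, the derived subgroup $G'$ is abelian, $G_3=[G',G]\le Z(G)$, and (because no element outside $G'$ can be central in a Camina group) $Z(G)\le G'$; moreover $G_3<G'$, so $H:=G/G_3$ is nonabelian of class $2$. The commutator then induces two bilinear maps, an alternating map $b_1\colon (G/G')\times(G/G')\to G'/G_3$, $(\bar x,\bar y)\mapsto [x,y]G_3$, and a map $b_2\colon (G/G')\times(G'/G_3)\to G_3$, $(\bar x,\bar c)\mapsto[x,c]$ (both well defined because $G'$ is abelian and $[G',G_3]=1$), linked by the graded Jacobi identity; these are the objects I would compute with throughout.

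Next I would show $H$ is semiextraspecial and draw the first inequality. Since $G$ is Camina and $G'\not\le G_3$, the quotient $H=G/G_3$ is again a Camina group, and being of class $2$ it is semiextraspecial. This already gives $|G':G_3|^2=|H'|^2\le|H:H'|=|G:G'|$ and, because each central quotient of $H$ is extraspecial, shows $|G:G'|$ is an even power of $p$; equivalently, in terms of $b_1$, for every nonzero $\phi\in(G'/G_3)^{*}$ the alternating form $\phi\circ b_1$ is non-degenerate on $G/G'$.

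The heart of the matter is the reverse inequality $|G:G'|\le|G':G_3|^2$, which upgrades semiextraspecial to ultraspecial; this is the step I expect to be the main obstacle, since semiextraspecial-ness alone never forces equality, so the extra input must come from $G$ actually having class $3$. Here I would first reduce to the case $|G_3|=p$: choosing $N<G_3$ of index $p$ (automatically normal, as $G_3\le Z(G)$), the quotient $G/N$ is still Camina of class $3$ and satisfies $(G/N)/(G/N)_3=H$, so it suffices to treat $|G_3|=p$. With $G_3$ of order $p$ I would exploit the \emph{full} Camina condition on $G$ --- not merely on $H$ --- namely that for each $g\notin G'$ the map $x\mapsto[g,x]$ is onto all of $G'$, so that its restriction to $K_g:=\{x:[g,x]\in G_3\}$ is a surjective homomorphism $K_g\to G_3$ with kernel $C_G(g)$. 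Combining this surjectivity onto $G_3$ for every $g$ with the semiextraspecial surjectivity of $b_1(\bar g,-)$ and the Jacobi relation between $b_1$ and $b_2$, I would run a rank and counting argument --- tracking $C_G(G')$ and $Z(G)$ as the left and right radicals of the perfect pairing that $b_2$ induces between $G/C_G(G')$ and $G'/Z(G)$ --- to force $|G:G'|\le|G':G_3|^2$, hence ultraspecial.

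Finally, for the assertion that $|G':G_3|$ is a square I would again reduce to $|G_3|=p$ and produce a non-degenerate alternating $\mathbb{F}_p$-valued form on $G'/G_3$ out of the class-$3$ commutator data (through $b_2$ and the Jacobi identity), so that $\dim_{\mathbb{F}_p}(G'/G_3)$, and hence $\log_p|G':G_3|$, is even. I expect this parity statement, together with the ultraspecial equality, to be the second delicate point; it is precisely what guarantees that the bound $|G_3|\le|G':G_3|^{1/2}$ of Theorem \ref{mainfour} is an integer.
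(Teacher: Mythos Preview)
The paper does not prove Lemma~\ref{basic1}; it simply quotes it from MacDonald (Theorem~5.2 of \cite{some}). So there is no argument in the paper to compare yours against line by line. That said, the paper does contain, for independent reasons, a clean proof of the ultraspecial equality: its Theorem~\ref{vanishing}(1), specialised via $V_1(G)=G'$, $V_2(G)=G_3$, $V_3(G)=1$, gives $|G:G'|=|G':G_3|^2$ directly. This is a much shorter route than your proposed ``rank and counting'' argument through the pairing $b_2$ and the subgroups $C_G(G')$, $Z(G)$; as written, that argument only yields $|G:C_G(G')|=|G':Z(G)|$, which does not by itself force $|G:G'|\le|G':G_3|^2$. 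You would still need to pin down $Z(G)=G_3$ and $|G:C_G(G')|=|G':G_3|$ \emph{before} you know the ultraspecial equality, and the paper's own proofs of those facts (Lemma~\ref{basic2}, Lemma~\ref{fixtwo}) already presuppose Lemma~\ref{basic1}. So if you want a self-contained bilinear-form proof, this step needs real work; invoking Theorem~\ref{vanishing} bypasses it entirely.

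For the parity claim that $|G':G_3|$ is a square, your plan to ``produce a non-degenerate alternating $\mathbb F_p$-form on $G'/G_3$ out of $b_2$ and the Jacobi identity'' looks doubtful: $G'$ is abelian, $b_2$ pairs $G/G'$ with $G'/G_3$, and the Hall--Witt relation gives an alternating form on $G/G'$, not on $G'/G_3$; there is no evident descent. MacDonald's route (after reducing to $|G_3|=p$) is instead to show that $(G,G_3)$ is again a Camina pair, so a faithful irreducible $\chi$ vanishes off $G_3$ and hence $\chi(1)^2=|G:G_3|=|G':G_3|^3$, forcing $|G':G_3|$ to be a square. If you want to stay with bilinear maps, the honest target is therefore to show $[g,G]=G_3$ for every $g\in G'\setminus G_3$ (equivalently $Z(G)=G_3$); once you have that, the character count above finishes the parity statement. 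Your identification of the easy half (semiextraspecial), the reduction to $|G_3|=p$, and the surjectivity $[g,A(g)]=G_3$ are all correct and match what the paper uses elsewhere (Lemma~\ref{eight}).
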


Suppose $a \in G \setminus G'$.  We will define $A(a) = \{ x \in G \mid [a,x] \in G_3 \}$.  Observe that $A(a)/G_3 = C_{G/G_3} (aG_3)$.  Since $G/G_3$ is an ultraspecial group, we have the following corollary.

\begin{corollary}\label{basic5}
Let $G$ be a Camina $p$-group of nilpotence class $3$.  Then $|G:A(a)| = |G':G_3| = |G:G'|^{1/2}$.
\end{corollary}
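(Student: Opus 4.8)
The plan is to transfer everything into the quotient $H = G/G_3$, which by Lemma \ref{basic1} is ultraspecial, and to exploit both the ultraspecial identity relating $|H:H'|$ to $|H'|$ and the Camina property of $H$. Writing $H' = G'/G_3$, I would first record the dictionary $H/H' \cong G/G'$ and $|H'| = |G':G_3|$, so that the defining ultraspecial equation $|H:H'| = |H'|^2$ reads $|G:G'| = |G':G_3|^2$. Taking square roots then yields the second claimed equality $|G':G_3| = |G:G'|^{1/2}$ directly, and confirms along the way that $|G':G_3|$ is a perfect square.

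For the first equality I would use that an ultraspecial group is in particular semiextraspecial, hence (by Verardi's theorem, recalled in the text above) a Camina $p$-group, necessarily of nilpotence class $2$. Since $a \in G \setminus G'$, its image $aG_3$ lies in $H \setminus H'$, so the Camina property applied in $H$ says that the $H$-conjugacy class of $aG_3$ is the full coset $(aG_3)H'$. This coset has exactly $|H'| = |G':G_3|$ elements, so by the orbit--stabilizer relation $|H : C_H(aG_3)| = |G':G_3|$.

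It then remains only to translate this centralizer index back to $G$. Using the observation recorded just before the statement, $C_H(aG_3) = A(a)/G_3$, and since $G_3 \le A(a)$ (as $[a,x] \in [G,G_3] = 1$ for $x \in G_3$) the correspondence theorem gives $|H : C_H(aG_3)| = |G : A(a)|$. Combining this with the previous paragraph produces $|G:A(a)| = |G':G_3|$, and together with the square-root identity this completes the proof.

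I do not anticipate a genuine obstacle here: the whole argument is a bookkeeping exercise once one notices that $H = G/G_3$ is simultaneously ultraspecial and a Camina group of class $2$. The only point requiring a little care is to confirm that the conjugacy class of $aG_3$ really is a full coset of $H'$ rather than something smaller, which is exactly the content of $H$ being a Camina group applied to the element $aG_3 \notin H'$; checking the equivalence $a \notin G' \Leftrightarrow aG_3 \notin H'$ keeps this step honest.
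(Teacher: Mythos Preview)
Your proposal is correct and is exactly the argument the paper has in mind: the paper gives no separate proof, simply remarking ``Since $G/G_3$ is an ultraspecial group, we have the following corollary,'' and your write-up is precisely the unpacking of that sentence via the ultraspecial identity $|H:H'|=|H'|^2$ and the Camina property of $H=G/G_3$. One cosmetic point: you do not need to justify $G_3\le A(a)$ separately, since the paper defines $A(a)$ by $A(a)/G_3=C_{G/G_3}(aG_3)$, so the containment is built in.
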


Under the assumption that $G$ had nilpotence class $3$, we know that $G_3 = [G',G] > 1$ and $[G_3,G] = 1$.  This implies that $G_3 \le Z(G)$.  On the other hand, we write $Z_2 (G)/Z(G) = Z(G/Z(G))$.  It is not difficult to see that $G' \le Z_2 (G)$ and $Z_2 (G) < G$.  In fact, when $G$ is a Camina group, these subgroups will coincide.  The following was proved in Corollary 5.3 of \cite{some}.

\begin{lemma}\label{basic2}
Let $G$ be a Camina $p$-group of nilpotence class $3$.  Then $G_3 = Z(G)$ and $G' = Z_2 (G)$.
\end{lemma}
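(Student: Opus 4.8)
The plan is to establish the two equalities separately, noting that the inclusions $G_3 \le Z(G)$ and $G' \le Z_2(G) < G$ are already recorded in the discussion preceding the statement, so only the reverse inclusions require work. First I would observe that $Z(G) \le G'$: if $g \in G \setminus G'$, then the Camina hypothesis gives that the conjugacy class of $g$ is $gG'$, a set of size $|G'| > 1$, so $g$ is not central; hence $Z(G) \subseteq G'$.

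Next I would prove $G' = Z_2(G)$, which I expect to be short. Suppose $x \in Z_2(G) \setminus G'$. By the definition of $Z_2(G)$ we have $[x,G] \le Z(G)$, and since $x \notin G'$ the Camina property yields that $[x,G] = G'$ as a set. Combining these, $G' = [x,G] \le Z(G)$, so $Z(G) = G'$ and $G$ would have class $2$, contradicting the class $3$ hypothesis. Therefore $Z_2(G) \le G'$, and equality follows.

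The substance of the lemma is the equality $Z(G) = G_3$, i.e. $Z(G) \le G_3$. First I would reduce to the case $|G_3| = p$: for a maximal subgroup $N < G_3$ (automatically normal, since $G_3 \le Z(G)$), the quotient $G/N$ is again a Camina $p$-group of class $3$ with $(G/N)_3 = G_3/N$ of order $p$, and if $Z(G/N) = G_3/N$ then any $z \in Z(G)$ satisfies $zN \in Z(G/N) = G_3/N$, giving $z \in G_3$. Assuming $|G_3| = p$, I would use the observation that every conjugacy class meeting $G'$ lies in a single coset of $G_3$: for $g \in G'$ the map $x \mapsto [g,x]$ is a homomorphism $G \to G_3$ (its values are central), so the class of $g$ equals $g\,[g,G]$, a coset of the subgroup $[g,G] \le G_3$. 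Thus $Z(G) = G_3$ is equivalent to the statement that no coset $gG_3$ with $g \in G' \setminus G_3$ is pointwise central, i.e. that every $g \in G' \setminus G_3$ satisfies $[g,x] \ne 1$ for some $x$. To contradict the existence of a central $g \in G' \setminus G_3$, I would invoke Lemma~\ref{basic1}: since $G/G_3$ is ultraspecial, for any $a \notin G'$ the induced map $\overline{x} \mapsto [a,x]G_3$ is onto $G'/G_3$, so I can write $g \equiv [a,b] \pmod{G_3}$ with $a,b \notin G'$; as $g$ and $G_3$ are central, $[a,b]$ itself is then central.

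The hard part will be extracting a contradiction from $[a,b] \in Z(G)$ with $[a,b] \notin G_3$. The natural tool, the Hall--Witt identity, collapses in a class $3$ group (all triple commutators lie in $G_3 \le Z(G)$, so the conjugations disappear) to a symmetric relation of the form $[[b,c],a] = [[a,c],b]$ for all $c$; this holds identically and so does not by itself force $g \in G_3$. This is exactly the \emph{main obstacle}: the equality $Z(G) = G_3$ is invisible in the associated graded commutator data alone, so commutator identities together with the mere surjectivity of the ultraspecial form are not enough. To close the gap I would have to feed in the full strength of the Camina condition on $G$ itself --- the fact that $x \mapsto [a,x]$ is a \emph{bijection} from $G/C_G(a)$ onto all of $G'$, not just onto $G'/G_3$ --- and combine this with the nondegeneracy of the ultraspecial form on $G/G_3$ to control $C_G(a)$ finely enough that a central commutator $[a,b]$ is forced into $G_3$. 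Once $Z(G) = G_3$ is established, the two stated equalities are complete.
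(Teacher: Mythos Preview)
The paper does not prove this lemma; it merely records that the result is Corollary~5.3 of MacDonald~\cite{some}. So there is no in-paper argument to compare against, and the question is simply whether your sketch actually goes through.

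Your handling of $Z(G)\le G'$ and of $G'=Z_2(G)$ is correct, and the reduction of $Z(G)=G_3$ to the case $|G_3|=p$ is valid. The problem is exactly where you say it is, and you do not close it. After writing a hypothetical central element as $z=[a,b]$ with $a,b\notin G'$, you offer only a plan: ``feed in the full strength of the Camina condition \dots\ and combine this with the nondegeneracy of the ultraspecial form \dots\ to control $C_G(a)$ finely enough.'' No concrete step is carried out. The two tools you name --- the bijection $C_G(a)\backslash G \to G'$ induced by $x\mapsto [a,x]$, and the nondegeneracy of the alternating form on $G/G_3$ --- are already in force in the very configuration you are trying to exclude, so merely citing them does not produce a contradiction; and your own (correct) observation that Hall--Witt collapses here shows that nothing purely multilinear in the graded data will finish the job. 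As written, the proposal establishes $G'=Z_2(G)$ and $G_3\le Z(G)\le G'$, but leaves the decisive inclusion $Z(G)\le G_3$ as a stated intention rather than an argument. To make the proof self-contained you would need to supply MacDonald's actual mechanism from~\cite{some} (or an equivalent); what you have is an outline that stops at the hard step.
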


Let $G$ be a group.  We define $V_1 (G) = V (G)$ to be the product of the subgroups $V (\chi)$ where $\chi$ runs over the nonlinear irreducible characters of $G$ and $V (\chi)$ is the vanishing-off subgroup of $\chi$.  (See page 200 of \cite{text}.)  We say that $V(G)$ is the vanishing-off subgroup of $G$.  It is not difficult to see that $G$ is a Camina group if and only if $G' = V_1 (G)$.  It is useful to defined a a central series associated to $V (G)$ by $V_2 (G) = [V_1 (G), G] \le G'$ and $V_3 (G) = [V_2 (G),G] \le G_3$.  Notice that if $G$ is a Camina group of nilpotence class $3$, then $V_2 (G) = [G',G] = G_3 < G'$ and $V_3 (G) = [G_3,G] = 1 < G_3$.  The following is Theorem 2 (2) and Theorem 3 (3) of \cite{van}.

\begin{theorem}\label{vanishing}
Let $G$ be a solvable group where $V_3 (G) < G_3$ and let $C/G_3 = C_{G/V_3 (G)} (G'/V_3 (G))$, then
\begin{enumerate}
\item $|G:V_1(G)| = |G':V_2 (G)|^2$.
\item Either $|G:C| = |G':V_2 (G)|$ or $C = V_1 (G)$.
\end{enumerate}
\end{theorem}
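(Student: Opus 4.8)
The plan is to read the structure off the characters of $G$ and then encode the lower-central data in a commutator form. First I would record the character-theoretic input coming from the definition of $V_1(G)$: every nonlinear irreducible character of $G$ vanishes on $G \setminus V_1(G)$, so for $g \in G \setminus V_1(G)$ column orthogonality gives $|C_G(g)| = \sum_{\lambda} |\lambda(g)|^2 = |G:G'|$, the sum running over the linear characters. Hence $g^G = gG'$, and in fact $[g,G] = G'$, for every $g \notin V_1(G)$; that is, the elements outside $V_1(G)$ are ``Camina elements'' whose commutators with $G$ already fill all of $G'$. I would also note at once that the hypothesis $V_3(G) < G_3$ forces $V_2(G) < G'$, since $V_2(G) = G'$ would give $V_3(G) = [G',G] = G_3$.

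For part (1) I would pass to $\bar G = G/V_2(G)$. Here $V_1(G)/V_2(G)$ is central, and the Camina-element property shows it is exactly $Z(\bar G)$ and that every noncentral class of $\bar G$ is a full coset of $\bar G' = G'/V_2(G)$. The first real step is the reduction that $\bar G$ has nilpotence class $2$, i.e. $G_3 \le V_2(G)$: if some element of $\bar G'$ were noncentral, the Camina-element property would give $\bar G' = [\bar G', \bar G]$, which is impossible once one invokes nilpotence (immediate when $G$ is a $p$-group, and forced from solvability in general). Granting class $2$, the commutator induces a well-defined alternating form $f : G/V_1(G) \times G/V_1(G) \to G'/V_2(G)$ by $f(xV_1(G),yV_1(G)) = [x,y]V_2(G)$ (well-definedness uses $[G',V_1(G)] \le G_3 \le V_2(G)$). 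Because $[g,G] = G'$ for $g \notin V_1(G)$, each partial map $f(xV_1(G),-)$ is surjective onto $G'/V_2(G)$ whenever $x \notin V_1(G)$; in particular the preimage of the radical of $f$ is exactly $V_1(G)$ (this is where $V_2(G) < G'$ enters), so $f$ is nondegenerate.

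What remains for part (1) is to upgrade nondegeneracy to the exact equality $|G:V_1(G)| = |G':V_2(G)|^2$, equivalently $|\bar G : Z(\bar G)| = |\bar G'|^2$, and I expect this to be the main obstacle. This ultraspecial-type relation does \emph{not} follow from class $2$ together with vanishing off the center alone: the extraspecial groups of order $p^{1+2m}$ with $m > 1$ satisfy those conditions yet violate the equality. Those groups, however, are precisely the ones the hypothesis excludes, since for them $G_3 = V_3(G) = 1$. Thus the equality has to be extracted from the full force of $V_3(G) < G_3$ and from $V_2(G)$ being itself a commutator subgroup, in the same spirit as the proof that $G/G_3$ is ultraspecial for Camina groups (Lemma \ref{basic1}). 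Concretely, I would show that the common degree of the nonlinear irreducible characters of $\bar G$, which equals $|G:V_1(G)|^{1/2}$, is pinned to $|G':V_2(G)|$ by playing the level-$1$ form off against the nontriviality of the next commutator layer.

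For part (2) I would analyze that next layer. Using $G_3 \le V_2(G)$, commutators of $G$ with $G'$ lie in $G_3 \le V_2(G)$, and working modulo $V_3(G)$ (where $V_2(G)/V_3(G)$ is central) the commutator induces a pairing $P : G \times G'/V_2(G) \to G_3/V_3(G)$, $P(x,hV_2(G)) = [x,h]V_3(G)$, whose left radical is precisely the subgroup $C$ with $C/V_3(G) = C_{G/V_3(G)}(G'/V_3(G))$. The dichotomy then comes from the induced homomorphism $G'/V_2(G) \to \mathrm{Hom}(G/C, G_3/V_3(G))$: either the pairing is as nondegenerate as possible, in which case matching the two sides against the form $f$ of part (1) yields $|G:C| = |G':V_2(G)|$, or $P$ degenerates and the left radical swells back up to $C = V_1(G)$ (of index $|G':V_2(G)|^2$ by part (1)). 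The same difficulty as in part (1)—controlling exact sizes rather than mere inequalities—reappears here, and I would resolve it with the structural input developed for part (1).
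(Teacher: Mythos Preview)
The paper does not prove Theorem~\ref{vanishing}; it is quoted verbatim from \cite{van} (Theorems~2(2) and~3(3) there), so there is no in-paper argument to compare against. What you have written is therefore not a comparison target but an attempt to reconstruct the external proof.

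As a reconstruction, your setup is sound: the column-orthogonality computation giving $[g,G]=G'$ for $g\notin V_1(G)$, the reduction $G_3\le V_2(G)$, and the two commutator pairings $f$ and $P$ are exactly the right objects. However, the proposal is candid that it does not actually close the main gap. You correctly isolate the crux of part~(1)---upgrading ``$f$ is nondegenerate with every slice surjective'' to the exact ultraspecial equality $|G:V_1(G)|=|G':V_2(G)|^2$---and correctly observe that the extraspecial examples show this needs the full hypothesis $V_3(G)<G_3$; but the sentence ``I would show that the common degree \dots\ is pinned to $|G':V_2(G)|$ by playing the level-$1$ form off against the nontriviality of the next commutator layer'' is a statement of intent, not an argument. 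The same applies to part~(2): you set up the pairing $P$ and the dichotomy correctly, but ``I would resolve it with the structural input developed for part~(1)'' defers to a step you have not carried out. In short, the skeleton is right, but the load-bearing inequalities are asserted rather than proved; to complete this you would need the actual character-degree counting from \cite{van} (or an equivalent bilinear-form argument) that forces the precise index, not just a bound.
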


If $G$ is a Camina group of nilpotence class $3$, then by Theorem 2 (1) of \cite{van}, we know that $G/G' = G/V_1 (G)$, $G'/V_2 (G) = G'/G_3$, and $G_3/V_3 (G) = G_3$ are all elementary abelian $p$-groups for some prime $p$.  We conclude that $G$ is a $p$-group.  The following is an immediate corollary of Theorem \ref{vanishing}.

\begin{corollary} \label{vanone}
Let $G$ be a Camina $p$-group of nilpotence class $3$.  If $C = C_G (G')$, then either $|G:C| = |G':G_3| = |G:G'|^{1/2}$ or $C = G'$.
\end{corollary}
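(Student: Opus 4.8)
The plan is to derive Corollary \ref{vanone} as a direct specialization of Theorem \ref{vanishing}. The whole point is that, for a Camina $p$-group $G$ of nilpotence class $3$, the general hypotheses of Theorem \ref{vanishing} collapse to exactly the situation described in the corollary. So the first thing I would do is verify that such a $G$ satisfies the standing hypothesis $V_3(G) < G_3$ of Theorem \ref{vanishing}. The discussion immediately preceding the theorem already records that for a Camina group of nilpotence class $3$ one has $V_3(G) = [G_3,G] = 1$, and since $G$ has class $3$ we know $G_3 > 1$, so indeed $V_3(G) = 1 < G_3$. I would also note that $G$ is solvable, being a $p$-group.

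\emph{Next} I would rewrite the quantities appearing in Theorem \ref{vanishing} in terms of the lower central series of $G$. The same preliminary discussion gives $V_1(G) = G'$ (this is the characterization that $G$ is Camina iff $G' = V_1(G)$) and $V_2(G) = [G',G] = G_3$. Substituting these identifications, conclusion (1) of Theorem \ref{vanishing} reads $|G:G'| = |G':G_3|^2$, which upon taking square roots yields $|G':G_3| = |G:G'|^{1/2}$; this is the numerical identity I will need to match the two expressions in the corollary. Since $V_3(G) = 1$, the subgroup $C/G_3 = C_{G/V_3(G)}(G'/V_3(G))$ in the theorem becomes $C = C_G(G')$, exactly the $C$ in the corollary's statement.

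\emph{Finally}, with these substitutions in hand, I would read off conclusion (2) of Theorem \ref{vanishing}. It states that either $|G:C| = |G':V_2(G)|$ or $C = V_1(G)$. Translating via $V_2(G) = G_3$ and $V_1(G) = G'$, this becomes: either $|G:C| = |G':G_3|$ or $C = G'$. Combining the first alternative with the identity $|G':G_3| = |G:G'|^{1/2}$ established above gives the displayed chain $|G:C| = |G':G_3| = |G:G'|^{1/2}$, completing the corollary.

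\emph{I do not expect any serious obstacle here}, since the corollary is essentially a dictionary translation of an already-proved theorem. The only point requiring genuine (if brief) care is confirming the three identifications $V_1(G) = G'$, $V_2(G) = G_3$, and $V_3(G) = 1$ for a class-$3$ Camina group, together with the resulting equality $|G':G_3| = |G:G'|^{1/2}$; once those are in place the conclusion follows mechanically. The author's framing ("The following is an immediate corollary") confirms that this is meant to be a routine specialization rather than a substantive new argument.
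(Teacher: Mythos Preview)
Your proposal is correct and follows essentially the same route as the paper's own proof: identify $V_1(G)=G'$, $V_2(G)=G_3$, $V_3(G)=1<G_3$, note that $G$ is solvable, and then read off conclusion (2) of Theorem~\ref{vanishing}. Your write-up is simply more explicit, in particular spelling out why $G$ is solvable and deriving $|G':G_3|=|G:G'|^{1/2}$ from conclusion (1), whereas the paper's proof compresses all of this into two sentences.
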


\begin{proof}
Recall that $V_1 (G) = G'$.  We have seen that $V_2 (G) = G_3$ and $V_3 (G) = 1 < G_3$, so Theorem \ref{vanishing} applies.  The conclusion of the corollary are the conclusions of Theorem \ref{vanishing}.
\end{proof}

In our recent paper with Professor Isaacs \cite{fix}, we included a proof of the following lemma.  Since it is known that $Z(G) = G_3$ has exponent $p$, the hypothesis that $Z(G)$ is cyclic is equivalent to $|G_3| = p$.  (This is Proposition 5 of \cite {fix}.)

\begin{lemma} \label{fixone}
Let $G$ be a Camina group with nilpotence class $3$.  If $Z (G)$ is cyclic, then $|C_G (G'):G'| \ge p^2$.
\end{lemma}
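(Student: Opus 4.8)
The plan is to show that the cyclic-center hypothesis forces $C_G(G')$ to be strictly larger than $G'$, and then to read off the quantitative bound directly from a centralizer count. First I would translate the hypothesis: by Lemma \ref{basic2} we have $Z(G) = G_3$, and since $G_3$ has exponent $p$, the assumption that $Z(G)$ is cyclic is exactly the statement $|G_3| = p$. Write $|G':G_3| = p^d$; by Lemma \ref{basic1} this is a square, and it exceeds $1$ because class $3$ forces $G' > G_3$ (otherwise $G_3 = [G',G] = [G_3,G] = 1$). Hence $d$ is even and $d \ge 2$. Recall also from Lemma \ref{basic1} that $G/G_3$ is ultraspecial, which gives $|G:G'| = |G':G_3|^2 = p^{2d}$.

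The key device is the commutator map. For a fixed $z \in G'$, consider $\phi_z \colon G \to G_3$ given by $\phi_z(g) = [g,z]$. Since $[G',G] = G_3 = Z(G)$, every value $[g,z]$ lies in the central subgroup $G_3$, so the standard commutator identity $[g_1g_2,z] = [g_1,z]^{g_2}[g_2,z]$ collapses (the conjugation correction is trivial because it lies in $Z(G)$) and shows $\phi_z$ is a homomorphism with kernel $C_G(z)$. Thus $G/C_G(z)$ embeds in $G_3$. When $z \in G' \setminus G_3 = G' \setminus Z(G)$ the map $\phi_z$ is nontrivial, and because $|G_3| = p$ this forces $|G:C_G(z)| = p$. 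This is the one place where $|G_3| = p$ is essential: it is what caps each individual centralizer index at $p$.

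Next I would assemble $C_G(G')$ from finitely many such conditions. Choosing $z_1,\dots,z_d \in G'$ whose images generate the elementary abelian group $G'/G_3$ of rank $d$, the symmetric homomorphism property of $z \mapsto [g,z]$ (again valid because the values are central, and noting $[g,G_3]=1$) shows that any $g$ centralizing all the $z_i$ centralizes all of $G'$; hence $C_G(G') = \bigcap_{i=1}^{d} C_G(z_i)$. Each factor has index $p$, so $|G:C_G(G')| \le p^d = |G':G_3|$. Combining this with $|G:G'| = |G':G_3|^2$ yields
\[
|C_G(G'):G'| = \frac{|G:G'|}{|G:C_G(G')|} \ge \frac{|G':G_3|^2}{|G':G_3|} = |G':G_3| \ge p^2,
\]
as desired.

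I expect the delicate points to be bookkeeping rather than conceptual: verifying carefully that $\phi_z$ is a homomorphism and that the intersection of the $d$ centralizers is exactly $C_G(G')$ (both rest on every relevant commutator landing in $Z(G) = G_3$), and observing that it is the squareness of $|G':G_3|$ from Lemma \ref{basic1}, not merely $G' > G_3$, that upgrades the final bound from $p$ to $p^2$. As a consistency check, Corollary \ref{vanone} already reduces the lemma to excluding the case $C_G(G') = G'$, and the computation above does exactly that, since it produces $|C_G(G'):G'| \ge p^2 > 1$.
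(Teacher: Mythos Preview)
Your proof is correct. The paper does not actually prove this lemma here; it merely imports it as Proposition~5 of \cite{fix}. Your argument is a clean, self-contained replacement: using $|G_3|=p$ to force $|G:C_G(z)|=p$ for each $z\in G'\setminus G_3$, and then intersecting $d$ such centralizers (where $|G':G_3|=p^d$) to obtain $|G:C_G(G')|\le p^d$, hence $|C_G(G'):G'|\ge p^d\ge p^2$ via the ultraspecial relation $|G:G'|=p^{2d}$ from Lemma~\ref{basic1}.

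Two remarks. First, your bound $|C_G(G'):G'|\ge |G':G_3|$ is sharper than the stated lemma and, once fed into Corollary~\ref{vanone}, immediately yields the exact equality $|G:C_G(G')|=|G':G_3|$ of Lemma~\ref{fixtwo}; so your single computation subsumes both Lemmas~\ref{fixone} and~\ref{fixtwo} without the two-step structure the paper uses. Second, your final displayed equation silently uses $G'\le C_G(G')$, i.e.\ that $G'$ is abelian. This is standard for Camina $p$-groups of class~$3$ and is assumed throughout the paper (indeed the very statement $|C_G(G'):G'|$ presupposes it), so it is not a gap, but it would do no harm to flag it explicitly.
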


Combining Corollary \ref{vanone} with Lemma \ref{fixone}, we obtain the following result which has appeared in the literature in a number of places.  Until now, the only proof of this result has been MacDonald's argument within the proof of Theorem 5.2 of \cite{some}.  Therefore, we have included an independent proof of this fact.

\begin{lemma} \label{fixtwo}
Let $G$ be a Camina group with nilpotence class $3$.  If $|G_3| = p$, then $|C_G (G'):G'| = |G':G_3|$.  In particular, $|G:C_G (G')| = |G:G'|^{1/2}$.
\end{lemma}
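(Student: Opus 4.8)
The plan is to combine Corollary \ref{vanone} and Lemma \ref{fixone}, exactly as the discussion preceding the statement suggests. Write $C = C_G(G')$. The first move is to translate the arithmetic hypothesis $|G_3| = p$ into the structural hypothesis required by Lemma \ref{fixone}. By Lemma \ref{basic2} we have $Z(G) = G_3$, and as recorded just before Lemma \ref{fixone} the condition ``$Z(G)$ is cyclic'' is equivalent to $|G_3| = p$ (because $Z(G) = G_3$ has exponent $p$). Hence under our hypothesis $Z(G)$ is cyclic.

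With $Z(G)$ cyclic, Lemma \ref{fixone} gives $|C:G'| \ge p^2$; in particular $C \neq G'$. Now I would invoke the dichotomy of Corollary \ref{vanone}: either $|G:C| = |G':G_3| = |G:G'|^{1/2}$ or $C = G'$. Since the second alternative has just been excluded, the first must hold, which already yields the ``in particular'' assertion $|G:C_G(G')| = |G:G'|^{1/2}$.

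To obtain the main equality $|C:G'| = |G':G_3|$, I would apply the tower law $|G:G'| = |G:C|\,|C:G'|$, valid since $G' \le C \le G$. Substituting $|G:C| = |G:G'|^{1/2}$ gives $|C:G'| = |G:G'|^{1/2}$. Finally, since $G/G_3$ is ultraspecial by Lemma \ref{basic1}, we have $|G:G'| = |G':G_3|^2$, equivalently $|G':G_3| = |G:G'|^{1/2}$ (this value is also recorded in Corollary \ref{basic5}). Comparing the two expressions for $|G:G'|^{1/2}$ yields $|C:G'| = |G':G_3|$, as wanted.

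There is no serious obstacle here: the only genuine content is the observation that Lemma \ref{fixone} eliminates the degenerate branch $C = G'$ of Corollary \ref{vanone}, after which the argument is index bookkeeping resting on the ultraspecial relation $|G:G'| = |G':G_3|^2$. The one step that warrants a moment's care is the reduction of $|G_3| = p$ to the cyclicity of $Z(G)$, which depends on identifying $Z(G)$ with $G_3$ via Lemma \ref{basic2} together with the fact that $G_3$ has exponent $p$.
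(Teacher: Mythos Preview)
Your proof is correct and follows essentially the same approach as the paper: use Lemma~\ref{fixone} to rule out the branch $C = G'$ of Corollary~\ref{vanone}, then read off the index. You have simply made explicit the index bookkeeping (via the tower law and the ultraspecial identity $|G:G'| = |G':G_3|^2$) that the paper leaves implicit when passing from $|G:C| = |G':G_3|$ to $|C:G'| = |G':G_3|$.
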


\begin{proof}
By Lemma \ref{fixone}, we know that $G' < C_G(G')$.  Using Corollary \ref{vanone}, we obtain the conclusion $|C_G (G'):G'| = |G':G_3|$.
\end{proof}

The following is proved as Proposition 4 of \cite{fix}.  Like the previous lemma, it had previously appeared in the literature but its proof only appeared within the proof of another result.

\begin{lemma} \label{fixthree}
Let $G$ be a Camina group with nilpotence class $3$.  Then $C_G (G')/G_3$ is an elementary abelian $p$-group.
\end{lemma}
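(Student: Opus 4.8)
The plan is to verify separately the two defining properties of an elementary abelian $p$-group for the quotient $C/G_3$, where I write $C = C_G(G')$: namely that $C/G_3$ is \emph{abelian} and that it has \emph{exponent} $p$. Since $G$ has class $3$ its derived subgroup $G'$ is abelian, so $G' \le C$, and $G_3 \le Z(G) \le C$ by Lemma \ref{basic2}; thus all the relevant subgroups sit inside $C$.

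For commutativity I would invoke the three subgroups lemma. Fix $x, y \in C$ and an arbitrary $g \in G$. Every commutator $[y,g]$ and $[g,x]$ lies in $G'$, and both $x$ and $y$ centralize $G'$, so the two triple commutators $[y,g,x] = [[y,g],x]$ and $[g,x,y] = [[g,x],y]$ are trivial. The three subgroups lemma (equivalently, the Jacobi identity $[x,y,g][y,g,x][g,x,y] = 1$ for the central triple commutators of a class-$3$ group) then forces $[x,y,g] = 1$. As $g$ was arbitrary, $[x,y] \in Z(G) = G_3$, whence $[C,C] \le G_3$ and $C/G_3$ is abelian.

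For the exponent, the step I expect to be the real obstacle is to show that $[x,g]^p = 1$ for every $x \in C$ and $g \in G$; once this is in hand the rest is automatic. The key trick is to use that $G/G'$ is elementary abelian (as recorded in the excerpt), so that $g^p \in G'$; since $x$ centralizes $G'$, this gives $[x, g^p] = 1$. On the other hand, the standard commutator identity in a group of class $3$ reads $[x, g^p] = [x,g]^p\,[x,g,g]^{\binom{p}{2}}$, and the correction factor is trivial because $[x,g,g] \in G_3$ has order dividing $p$ while $p \mid \binom{p}{2}$ (here $p$ is odd, as class-$3$ Camina $p$-groups force). Comparing the two evaluations of $[x,g^p]$ yields $[x,g]^p = 1$. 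Feeding this into the companion identity $[x^p, g] = [x,g]^p\,[x,g,x]^{\binom{p}{2}}$, whose correction term vanishes for the same reason (and whose first factor is now trivial), gives $[x^p, g] = 1$ for all $g$, so $x^p \in Z(G) = G_3$.

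Combining the two paragraphs, $C/G_3$ is abelian of exponent $p$, hence elementary abelian. The only point requiring care throughout is the vanishing of the binomial correction terms, which is exactly where the oddness of $p$ is used; everything else is a direct application of commutator calculus in a class-$3$ group together with the facts, available from the excerpt, that $G/G'$ and $G'/G_3$ are elementary abelian and that $Z(G) = G_3$.
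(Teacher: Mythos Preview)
Your argument is correct. The paper itself does not supply a proof of this lemma; it merely cites Proposition~4 of \cite{fix}. What you have written is a valid, self-contained replacement using the three subgroups lemma for the abelian part and standard class-$3$ commutator expansions for the exponent part.

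Two small remarks. First, in the identity $[x^p,g] = [x,g]^p\,[x,g,x]^{\binom{p}{2}}$, the correction term vanishes for a more direct reason than the one you give: since $[x,g] \in G'$ and $x \in C_G(G')$, the commutator $[[x,g],x]$ is already trivial, so no appeal to $p \mid \binom{p}{2}$ is needed at that step. Oddness of $p$ is therefore used only once, in showing $[x,g]^p = 1$. Second, the exponent half of your argument could be bypassed entirely by invoking Mann's theorem (cited later in the paper) that $G/G_3$ has exponent $p$; your computation has the virtue of avoiding that forward reference.
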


\section{${\mathcal A} (G)$}

If $G$ is a Camina $p$-group of nilpotence class $3$ and $a \in G \setminus G'$, recall that $A (a)$ satisfies $A (a)/G_3 = C_{G/G_3} (aG_3)$.  Since $G'/G_3 = Z (G/G_3)$, we see that $G' \le A(a)$ for all $a \in G \setminus G'$.  Also, we know by Corollary \ref{basic5} that $|G:A(a)| = |A(a):G'| = |G':G_3|$.  We begin with the following well-known fact.  Much of the work in this section was motivated by Section 2 of \cite{MaSc}.

\begin{lemma} \label{one}
Let $G$ be a Camina $p$-group of nilpotence class $3$.  If $|G_3|= p$ and $C = C_G (G')$, then $C = A(a)$ for all $a \in C \setminus G'$.
\end{lemma}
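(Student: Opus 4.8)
The plan is to establish equality via the two containments by cardinality: I will show directly that $C \le A(a)$ for every $a \in C \setminus G'$, and then an order count upgrades this to equality.

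First, recall from Lemma \ref{fixthree} that $C/G_3$ is an elementary abelian $p$-group; in particular it is abelian, so $[C,C] \le G_3$. Now fix $a \in C \setminus G'$ (such an element exists because $|G_3| = p$ forces $G' < C$ by Lemma \ref{fixone}). For any $x \in C$ we then have $[a,x] \in [C,C] \le G_3$, and since $A(a) = \{ y \in G \mid [a,y] \in G_3\}$ by definition, this says exactly that $x \in A(a)$. Hence $C \le A(a)$.

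It remains to compare orders. By Lemma \ref{fixtwo}, the hypothesis $|G_3| = p$ gives $|C:G'| = |G':G_3|$, while Corollary \ref{basic5} (together with the containment $G' \le A(a)$ recorded at the start of this section) gives $|A(a):G'| = |G':G_3|$. Therefore $|C| = |G'|\,|G':G_3| = |A(a)|$. Combining this with $C \le A(a)$ yields $C = A(a)$, as desired.

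I do not anticipate a serious obstacle here. The entire content is that Lemma \ref{fixthree} makes $C/G_3$ abelian, which immediately forces every commutator $[a,x]$ with $a,x \in C$ into $G_3$ and hence places $C$ inside $A(a)$. The hypothesis $|G_3| = p$ enters only through Lemma \ref{fixtwo}, to pin down the index $|C:G'|$ and match it against $|A(a):G'|$; the remaining step is a routine order argument.
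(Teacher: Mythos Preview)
Your proof is correct and follows essentially the same approach as the paper: use Lemma~\ref{fixthree} to see that $C/G_3$ is abelian so $C \le A(a)$, then compare orders via Corollary~\ref{basic5} and Lemma~\ref{fixtwo} to force equality. The only cosmetic difference is that the paper compares the indices $|G:A(a)|$ and $|G:C|$ directly, whereas you compare $|A(a):G'|$ and $|C:G'|$; these are equivalent.
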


\begin{proof}
By Lemma \ref{fixthree}, $C/G_3$ is abelian.  Since $a \in C$, this implies that $C/G_3$ centralizes $aG_3$, and so, $C \le A(a)$.  Applying Corollary \ref{basic5} and Lemma \ref{fixtwo}, we know that $|G:A(a)| = |G:C|$, and we conclude that $C = A(a)$.
\end{proof}

Let $G$ be a Camina $p$-group of nilpotence class $3$.  For $N \le G_3$ such that $|G_3:N| = p$, we define $C(N)/N = C_{G/N} (G'/N)$.  Set ${\mathcal C} (G) = \{ C(N) \mid N \le G_3, |G_3:N| = p \}$.  Observe that if $M < G_3$ and $M \le N$ with $|G_3:N| = p$, then $C (N/M) = C (N)/M$, and so, $\{ C \mid C/M \in {\mathcal C} (G/M) \} \subseteq {\mathcal C} (G)$.

Now, suppose $H$ is a semi-extraspecial $p$-group.  Define ${\mathcal A} (H) = \{ C_H (a) \mid a \in H \setminus H' \}$.  Since $H' = Z (H)$, it follows that $H' \le A$ for every $A \in {\mathcal A} (H)$.

From Lemma \ref{basic1}, we see that if $G$ is a Camina $p$-group of nilpotence class $3$, then $G/G_3$ is an ultraspecial group which implies that $G/G_3$ is semi-extraspecial.   If $a \in G \setminus G'$, then we have defined $A(a)/G_3 = C_{G/G_3} (aG_3)$ and with this in mind we define ${\mathcal A} (G) = \{ A (a) \mid a \in G \setminus G' \}$.  It follows that $A \in {\mathcal A} (G)$ if and only if  $A/G_3 \in {\mathcal A} (G/G_3)$.  Also, if $M < G_3$, then $A/M \in {\mathcal A} (G/M)$ if and only if $A \in {\mathcal A} (G)$.  In light of the previous paragraph, we have $G' \le A$ for every $A \in {\mathcal A} (G)$.


\begin{lemma} \label{two}
Let $G$ be a Camina $p$-group of nilpotence class $3$.
\begin{enumerate}
\item If $A \in {\mathcal A} (G)$ and $A/G_3$ is abelian, then $A = A(a)$ for all $a \in A \setminus G'$.
\item If $C \in {\mathcal C} (G)$, then $C = A(a)$ for all $a \in C \setminus G'$.
\item ${\mathcal C} (G) \subseteq {\mathcal A} (G)$.
\end{enumerate}
\end{lemma}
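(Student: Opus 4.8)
The plan is to handle the three parts in sequence, with part (3) falling out of part (2), and with part (2) reduced to the already-proved case $|G_3|=p$ of Lemma \ref{one}.

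For part (1), I would argue purely by order counting. Since $A \in {\mathcal A}(G)$, we have $A = A(b)$ for some $b \in G \setminus G'$, so Corollary \ref{basic5} gives $|G:A| = |G':G_3| = |G:G'|^{1/2}$; in particular $|A:G'| = |G:G'|^{1/2} > 1$, so $A \setminus G'$ is nonempty. Fix $a \in A \setminus G'$. Because $A/G_3$ is abelian, every $x \in A$ satisfies $[a,x] \in G_3$, hence $x \in A(a)$, so $A \le A(a)$. But Corollary \ref{basic5} also gives $|G:A(a)| = |G':G_3| = |G:A|$, so $A$ and $A(a)$ have the same order, and the inclusion forces $A = A(a)$.

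For part (2), the idea is to pass to $\overline{G} = G/N$ and invoke Lemma \ref{one}. I would first check that $\overline{G}$ is again a Camina $p$-group of nilpotence class $3$: since $N \le G_3 \le Z(G)$ we have $N \trianglelefteq G$, and $N \le G_3 < G'$ gives $G' \not\le N$, so $\overline{G}$ is Camina; moreover $(\overline{G})_3 = [G'/N, G/N] = G_3/N$ has order $p$, so the class is exactly $3$. By the definition of ${\mathcal C}(G)$ we have $C(N)/N = C_{\overline{G}}(G'/N) = C_{\overline{G}}((\overline{G})')$, so Lemma \ref{one} applies to $\overline{G}$ and yields $C(N)/N = A(\overline{a})$ for every $\overline{a} \in (C(N)/N) \setminus (\overline{G})'$, where $A(\overline{a})$ is formed inside $\overline{G}$. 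The one computation to record is the correspondence $A(\overline{a}) = A(a)/N$: since $[\overline{a},\overline{x}] = [a,x]N$ lies in $(\overline{G})_3 = G_3/N$ exactly when $[a,x] \in G_3$ (as $N \le G_3$), we get $A(\overline{a}) = \{\, xN : [a,x] \in G_3 \,\} = A(a)/N$. Now for $a \in C(N) \setminus G'$ we have $\overline{a} \in (C(N)/N) \setminus (\overline{G})'$, whence $C(N)/N = A(\overline{a}) = A(a)/N$, and taking preimages gives $C(N) = A(a)$, as desired.

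For part (3), I would combine part (2) with the existence of a usable element. Applying Lemma \ref{fixtwo} to $\overline{G}$ (which has $|(\overline{G})_3| = p$) shows $C_{\overline{G}}((\overline{G})') > (\overline{G})'$, i.e. $C(N) > G'$, so $C(N) \setminus G'$ is nonempty; choosing any $a$ there, part (2) gives $C(N) = A(a) \in {\mathcal A}(G)$. As $C(N)$ was an arbitrary member of ${\mathcal C}(G)$, this yields ${\mathcal C}(G) \subseteq {\mathcal A}(G)$. The main obstacle in the whole argument is the bookkeeping concentrated in part (2): one must verify that reduction modulo $N$ preserves the class-$3$ Camina structure and collapses $G_3$ to order $p$ (making Lemma \ref{one} available), and then nail down the identification $A(\overline{a}) = A(a)/N$ so that the conclusion downstairs lifts back to $G$. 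Once that correspondence is secured, part (2) is immediate, part (3) is a one-line consequence, and part (1) is pure order counting.
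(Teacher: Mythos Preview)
Your proof is correct and follows essentially the same approach as the paper: part (1) by the identical order-counting argument, and parts (2)--(3) by passing to $G/N$ and invoking Lemma \ref{one}. The only organizational difference is that the paper first deduces $C \in {\mathcal A}(G)$ and then, using Lemma \ref{fixthree} to see that $C/G_3$ is abelian, applies part (1) to obtain part (2), whereas you lift the equality $C(N)/N = A(\overline a)$ directly via the correspondence $A(\overline a) = A(a)/N$; both routes are equally valid and of comparable length.
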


\begin{proof}
Suppose $A \in {\mathcal A} (G)$ so that $A/G_3$ is abelian.  Consider $a \in A \setminus G'$.  Since $A/G_3$ is abelian, we have $A \le A (a)$.  By Corollary \ref{basic5}, we have $|G:A| = |G:A(a)|$, and hence, $A = A(a)$.  Suppose $C \in {\mathcal C} (G)$.  Thus, there is a subgroup $N$ so that $|G_3:N| = p$ and $C = C(N)$.  By Lemma \ref{one}, we have $C/N \in {\mathcal A} (G/N)$.  This implies that $C/G_3 \in {\mathcal A} (G/G_3)$ and thus, $C \in {\mathcal A} (G)$.  By Lemma \ref{fixthree} applied to $G/N$, we see that $C/G_3 \cong \frac {C/N}{G_3/N}$ is abelian.  This implies that the first conclusion applies to $C$.
\end{proof}

We continue to assume that $G$ is a Camina group of nilpotence class $3$.  Note that if $A (a) = C(N)$, then $A(a)/G_3$ is abelian by Lemma \ref{fixthree}.  With this in mind, we define ${\mathcal A}^* (G) = \{ A \in {\mathcal A} (G) \mid A/G_3 ~is~abelian\}$.  Thus, we have ${\mathcal C} (G) \subseteq {\mathcal A}^* (G) \subseteq {\mathcal A} (G)$.  Since $G_3$ has at least one subgroup of index $p$, it follows that ${\mathcal C} (G)$ is nonempty, and this implies that ${\mathcal A}^* (G)$ is nonempty.  Hence, $G/G_3$ must have at least one centralizer of an element in $G/G_3 \setminus G'/G_3$ that is abelian.  Since $|G:A(a)| = |G:G'|^{1/2}$, we have that $|A(a):G_3| = |G:G'|^{1/2} |G':G_3| = |G:G'|$.  It follows that $G/G_3$ has at least one abelian subgroup of order $|G:G'|$.  We now identify those groups in ${\mathcal A} (G)$ that lie in ${\mathcal C} (G)$.

\begin{lemma}\label{twoa}
Let $G$ be a Camina $p$-group of nilpotence class $3$.  Fix $A \in {\mathcal A} (G)$.
\begin{enumerate}
\item If $N < G_3$ such that $|G_3:N| = p$, then $A = C(N)$ if and only if $[G',A] \le N$.
\item If $M < G_3$, then $A/M \in {\mathcal C} (G/M)$ if and only if $M[G',A] < G_3$.
\item $A \in {\mathcal C} (G)$ if and only if $[G',A] < G_3$.
\end{enumerate}
\end{lemma}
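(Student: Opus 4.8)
The plan is to first establish part (1), from which parts (2) and (3) follow by passing to quotients. The key preliminary observation is a reformulation of membership in $C(N)$: unwinding the definition $C(N)/N = C_{G/N}(G'/N)$, an element $x \in G$ lies in $C(N)$ exactly when $[x,g] \in N$ for every $g \in G'$, that is, when $[G',x] \le N$ (recall $N \le G_3 \le Z(G)$ is normal, so there is no ambiguity in the order of the commutator). Since $[G',A]$ is generated by the subgroups $[G',a]$ as $a$ ranges over $A$, this yields the clean equivalence $A \le C(N) \iff [G',A] \le N$. The ``only if'' direction of part (1) is then immediate, and for the ``if'' direction I would first deduce $A \le C(N)$ from $[G',A] \le N$ and afterward upgrade this containment to equality by comparing indices.

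The index comparison is where the hypothesis $|G_3:N| = p$ does its work. On one side, $A \in {\mathcal A}(G)$ gives $|G:A| = |G:G'|^{1/2}$ by Corollary \ref{basic5}. On the other side, since $N \le G_3 < G'$, the quotient $G/N$ is again a Camina group; and because $(G/N)_3 = G_3/N$ has order $p$, it is a Camina $p$-group of nilpotence class $3$. Thus Lemma \ref{fixtwo} applies to $G/N$ and, using $(G/N)' = G'/N$ and $C_{G/N}(G'/N) = C(N)/N$, gives $|G:C(N)| = |(G/N):(G/N)'|^{1/2} = |G:G'|^{1/2} = |G:A|$. Equal index together with $A \le C(N)$ forces $A = C(N)$, completing part (1).

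For part (2), I would identify the members of ${\mathcal C}(G/M)$ explicitly. Writing $\bar N = N/M$ for $M \le N < G_3$ with $|G_3:N| = p$, one has $(G/M)/\bar N \cong G/N$ and $(G/M)'/\bar N \cong G'/N$, so that the group playing the role of $C(\bar N)$ in $G/M$ is exactly $C(N)/M$. Hence $A/M \in {\mathcal C}(G/M)$ if and only if $A = C(N)$ for some subgroup $N$ with $M \le N < G_3$ and $|G_3:N| = p$. By part (1) this occurs precisely when some such $N$ contains $[G',A]$, equivalently when there is an index-$p$ subgroup of $G_3$ containing the subgroup $M[G',A]$. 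Because $G_3 = Z(G)$ is elementary abelian, such a subgroup exists exactly when $M[G',A] < G_3$: if the product equals $G_3$ there is no room for a proper $N$, while if it is proper the elementary abelian quotient $G_3/M[G',A]$ supplies a subgroup of index $p$ whose preimage is the desired $N$. This gives the stated equivalence, and part (3) is simply the special case $M = 1$.

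I expect the only real subtlety to be bookkeeping in the quotient identifications, namely confirming that $G/N$ genuinely has nilpotence class $3$ so that Lemma \ref{fixtwo} is applicable (this is exactly why the hypothesis reads $|G_3:N| = p$ rather than $N = G_3$), and verifying that centralizers in $G/M$ correspond to the subgroups $C(N)$ defined in $G$. Once these identifications are in place, the arithmetic of indices and the elementary abelian structure of $G_3$ make the remainder routine.
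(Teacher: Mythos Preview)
Your argument is correct and follows essentially the same route as the paper's proof: both establish part (1) by observing $A \le C(N) \iff [G',A] \le N$ and then upgrading $A \le C(N)$ to equality by an index comparison, and both deduce parts (2) and (3) by the quotient bookkeeping you describe. The only cosmetic difference is that you justify $|G:C(N)| = |G:G'|^{1/2}$ directly via Lemma~\ref{fixtwo} applied to $G/N$, whereas the paper simply asserts $|G:A| = |G:C(N)|$, implicitly relying on the already-established fact (Lemma~\ref{two}(3)) that $C(N) \in {\mathcal A}(G)$.
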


\begin{proof}
Suppose $N < G_3$ such that $|G_3:N| = p$.  If $A = C(N)$, then $A/N$ centralizes $G'/N$, and so, $[G',A] \le N$.  Conversely, suppose that $[G',A] \le N$.  This implies that $A/N$ centralizes $G'/N$, and so, $A \le C(N)$.  Since $|G:A| = |G:C(N)|$, we conclude that $C(N) = A$.

Now, suppose $M < G_3$.  If $A/M \in {\mathcal C} (G/M)$, then there exists $M \le N < G_3$ with $|G_3:N| = p$ so that $A/M = C(N/M)$.  By part (1), this implies that $[G',A] \le N$, and so, $[G',A]M \le N < G_3$.  Conversely, if $[G',A]M < G_3$, then there exists $N$ with $M[G',A] \le N < G_3$ with $|G_3:N| = p$.  This implies that $[G',A] \le N$, so $A = C(N)$ by part (1), and $M \le N$ implies that $C(N/M) = C(N)/M$, so $A/M \in {\mathcal C} (G/M)$.  For conclusion (3), apply conclusion (2) when $M = 1$.
\end{proof}

We now show that ${\mathcal C} (G)$ has only one element if and only if $G' < C_G (G')$.  Furthermore, the one subgroup in ${\mathcal C} (G)$ is $C_G (G')$.

\begin{lemma} \label{four}
Let $G$ be a Camina $p$-group of nilpotence class $3$.  Then $G' < C_G (G')$ if and only if $|{\mathcal C} (G)| = 1$, and if this occurs, then ${\mathcal C} (G) = \{ C_G (G') \}$.
\end{lemma}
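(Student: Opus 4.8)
The plan is to prove both implications by establishing two facts relating the subgroups $C(N)$ to $C_G(G')$. The first fact is that $x \in C(N)$ precisely when $[G',x] \le N$, from which it follows that $C_G(G') = \bigcap_N C(N)$, the intersection running over all $N \le G_3$ with $|G_3 : N| = p$. Indeed, $x$ lies in every $C(N)$ if and only if $[G',x]$ lies in every such $N$; since $G_3 = Z(G)$ has exponent $p$ it is elementary abelian, so the intersection of its index-$p$ subgroups is trivial, and thus $x \in \bigcap_N C(N)$ forces $[G',x] = 1$. The second fact is that $|G : C(N)| = |G' : G_3|$ for each admissible $N$. To obtain this I would pass to the quotient $\overline{G} = G/N$: because $N \le G_3 < G'$, the quotient $\overline{G}$ is again a Camina $p$-group, and its third lower central term $\overline{G}_3 = G_3/N$ has order $p$, so $\overline{G}$ has nilpotence class exactly $3$. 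Identifying $C_{\overline{G}}(\overline{G}') = C(N)/N$, Lemma \ref{fixtwo} applied to $\overline{G}$ gives $|\overline{G} : C(N)/N| = |\overline{G} : \overline{G}'|^{1/2}$, which after clearing the bar and invoking Corollary \ref{basic5} yields $|G : C(N)| = |G : G'|^{1/2} = |G' : G_3|$.

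With these in hand the forward implication is short. Assume $G' < C_G(G')$; in particular $C_G(G') \ne G'$, so Corollary \ref{vanone} forces $|G : C_G(G')| = |G' : G_3|$. Since $C_G(G')$ centralizes $G'$ we have $[G', C_G(G')] = 1 \le N$, hence $C_G(G') \le C(N)$ for every $N$. As the two groups share the index $|G' : G_3|$ by the second fact, they coincide, so $C(N) = C_G(G')$ for all $N$ and ${\mathcal C}(G) = \{C_G(G')\}$ is a singleton.

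For the converse, suppose $|{\mathcal C}(G)| = 1$, say ${\mathcal C}(G) = \{C_0\}$, so $C(N) = C_0$ for every admissible $N$. By the first fact, $C_G(G') = \bigcap_N C(N) = C_0$, and by the second fact $|G : C_0| = |G' : G_3|$. If $C_G(G') = G'$ held, then $|G : C_G(G')| = |G : G'| = |G' : G_3|^2$; combined with $|G : C_0| = |G' : G_3|$ and $C_0 = C_G(G')$ this would give $|G' : G_3| = |G' : G_3|^2$, i.e. $G_3 = G'$, contradicting nilpotence class $3$. Hence $C_G(G') \ne G'$, so $G' < C_G(G')$, and ${\mathcal C}(G) = \{C_0\} = \{C_G(G')\}$ as claimed.

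I expect the main obstacle to be the second fact, namely the reduction to the case $|G_3| = p$ through the quotient $G/N$: one must verify carefully that $G/N$ remains a Camina $p$-group of nilpotence class exactly $3$ so that Lemma \ref{fixtwo} genuinely applies, and that $C_{G/N}(G'/N)$ pulls back to precisely $C(N)$, keeping the index bookkeeping consistent via Corollary \ref{basic5}. Once that fact is secured, both implications reduce to index counting against the dichotomy of Corollary \ref{vanone}.
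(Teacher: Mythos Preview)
Your argument is correct and follows essentially the same route as the paper: both directions hinge on the containment $C_G(G') \le C(N)$, the index equality $|G:C(N)| = |G':G_3|$, and the fact that the intersection of the index-$p$ subgroups of the elementary abelian group $G_3$ is trivial. The only cosmetic difference is that you obtain $|G:C(N)| = |G':G_3|$ by applying Lemma~\ref{fixtwo} to the quotient $G/N$, whereas the paper routes this through Lemma~\ref{two} (i.e., $C(N) \in \mathcal{A}(G)$) together with Corollary~\ref{basic5}; since Lemma~\ref{two} itself rests on Lemma~\ref{one} and hence on Lemma~\ref{fixtwo}, the two computations are equivalent.
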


\begin{proof}
Suppose that $G' < C_G (G')$ and write $C = C_G (G')$.  Let $N \le G_3$ with $|G_3:N| = p$.  Since $[C,G'] = 1 \le N$, we have $C \le C(N)$.  On the other hand, by Corollary \ref{basic5}, we know that $|C (N):G'| = |G':G_3|$, and by Theorem \ref{vanishing}, we have $|C:G'| = |G':G_3|$.  This implies that $|C(N):G'| = |C:G'|$, and we conclude that $C (N) = C$.

Conversely, suppose that $|{\mathcal C} (G)| = 1$, and write ${\mathcal C} (G) = \{ C \}$.  For every subgroup $N$ such that $|G_3:N|$, we have $C = C(N)$, and hence, $[G',C] \le N$ by Lemma \ref{twoa}.  This implies that $[G'C]$ is contained in the intersection of the subgroup of index $p$.  Since $G_3$ is elementary abelian, this intersection is trivial, so $[C,G'] = 1$ and $G' < C \le C_G (G')$.  By Lemma \ref{vanone}, we have $|G:C_G (G')| =  |G:G'|^{1/2}$.  By Lemma \ref{two}, we know that $C = A(a)$ for any $a \in C \setminus G'$ and by Lemma \ref{basic1} and Corollary \ref{basic5}, we deduce that $|G:C| = |G:G'|^{1/2}$.  Thus, $|G:C_G (G')| = |G:C|$, and we conclude that $C = C_G (G')$.  This proves the lemma.
\end{proof}

Next, we consider intersections between subgroups in ${\mathcal A}^* (G)$ and ${\mathcal A} (G)$.

\begin{lemma} \label{three}
Let $G$ be a Camina $p$-group of nilpotence class $3$.  If $A \in {\mathcal A}^* (G)$ and $B \in {\mathcal A} (G)$ with $A \ne B$, then $A \cap B = G'$ and $G = AB$; in particular, $G/G' = A/G' \times B/G'$.
\end{lemma}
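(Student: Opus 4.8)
The plan is to deduce everything from the strong ``self-reproducing'' property recorded in Lemma \ref{two}(1): since $A \in {\mathcal A}^* (G)$, the quotient $A/G_3$ is abelian, and therefore $A = A(a)$ for \emph{every} $a \in A \setminus G'$. This rigidity is what lets the stronger hypothesis $A \in {\mathcal A}^* (G)$ (as opposed to merely $A \in {\mathcal A}(G)$) carry the argument, while $B$ is allowed to be an arbitrary member of ${\mathcal A}(G)$. I would establish $A \cap B = G'$ by contradiction, and then obtain $G = AB$ and the direct product decomposition by a routine order count in $G/G'$.

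Since $B \in {\mathcal A}(G)$, write $B = A(b)$ for some $b \in G \setminus G'$. Recall that $G' \le A$ and $G' \le B$, so $G' \le A \cap B$; suppose for contradiction that this containment is proper and choose $x \in (A \cap B) \setminus G'$. As $x \in A \setminus G'$, Lemma \ref{two}(1) gives $A = A(x)$, that is, $A/G_3 = C_{G/G_3}(xG_3)$. On the other hand, $x \in B = A(b)$ means $[b,x] \in G_3$ by the definition of $A(b)$, and since $[x,b] = [b,x]^{-1}$ and $G_3$ is a subgroup, this is equivalent to $[x,b] \in G_3$; hence $b \in A(x) = A$. But now $b \in A \setminus G'$, so Lemma \ref{two}(1) applies once more to yield $A = A(b) = B$, contradicting $A \ne B$. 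Therefore $A \cap B = G'$.

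To finish, I would pass to the abelian quotient $\overline{G} = G/G'$ and set $\overline{A} = A/G'$ and $\overline{B} = B/G'$. By Corollary \ref{basic5} we have $|\overline{A}| = |\overline{B}| = |G:G'|^{1/2}$, while $\overline{A} \cap \overline{B} = (A \cap B)/G' = 1$. Since $\overline{G}$ is abelian, $\overline{A}\,\overline{B}$ is a subgroup of order $|\overline{A}|\,|\overline{B}| = |G:G'| = |\overline{G}|$, so $\overline{G} = \overline{A} \times \overline{B}$. Lifting back to $G$ gives $G = AB$ and $G/G' = A/G' \times B/G'$ at once.

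The only delicate point is the middle paragraph, where one must exploit the asymmetry of the hypotheses correctly: the self-reproducing property of Lemma \ref{two}(1) is applied \emph{to $A$}, first to rewrite $A$ as $A(x)$, and then, after using the symmetry of the commutator-membership condition to drag $b$ into $A$, to collapse $A$ onto $B$. Everything else reduces to an index computation, so I expect no serious obstacle beyond getting this sequence of implications in the right order.
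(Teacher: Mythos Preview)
Your argument is correct. The order count at the end matches the paper's exactly, and your middle paragraph is a valid shortcut.

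The route differs from the paper's in a small but genuine way. The paper, after picking $c \in (A \cap B)\setminus G'$, only records $A \le A(c)$ and then introduces $Z/G_3 = Z(B/G_3)$ to build something strictly larger than $A$ inside $A(c)$; this forces $|G:A(c)| < |G:G'|^{1/2}$, contradicting Corollary~\ref{basic5}. You instead invoke Lemma~\ref{two}(1) to get the \emph{equality} $A = A(x)$, then use the symmetry $[b,x]\in G_3 \Leftrightarrow [x,b]\in G_3$ to pull $b$ into $A$, and apply Lemma~\ref{two}(1) a second time to collapse $A$ onto $B$. This bypasses the auxiliary subgroup $Z$ and the index comparison entirely. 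What the paper's version buys is that it never needs the full force of $A = A(x)$, only $A \le A(x)$; what your version buys is a shorter, more transparent chain of implications that makes the role of the hypothesis $A \in \mathcal{A}^*(G)$ (as opposed to $A \in \mathcal{A}(G)$) completely explicit.
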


\begin{proof}
Suppose that $c \in A \cap B$, and let $C = A (c)$.  Since $A/G_3$ is abelian and $c \in A$, $A \le C$.  Let $Z/G_3 = Z (B/G_3)$.  We note that $B = A(b)$ for some $b \in G \setminus G'$, and $b \in Z$, so $G_3 < Z$. Since $c \in B$, we have $Z \le C$.  If $b \in A$, then $A \le B$ since $A/G_3$ is abelian, and by Corollary \ref{basic5}, we have $|G:A| = |G:G'|^{1/2} = |G:B|$, and so, $A = B$ which is a contradiction.  Thus, $b$ is not in $A$, and so, $A < AZ \le C$.  This implies that $|G:C| < |G:A| = |G:G'|^{1/2}$.  By Corollary \ref{basic5}, if $c \in G \setminus G'$, then $|G:C| = |G:G'|^{1/2}$, which contradicts $|G:C| < |G:A|$.  This implies that $c \in G'$.  We deduce that $A \cap B \le G'$.  On the other hand, $G' \le A \cap B$, so $A \cap B = G'$.  Now, $|AB:G'| = |A:G'||B:G'| = |G:G'|^{1/2} |G:G'|^{1/2} = |G:G'|$.  We conclude that $G = AB$.
\end{proof}

Let $A$ be a subgroup of $G$ and let $g \in G$.  Then we write $[g,A]$ for the set $\{ [g,a] \mid a \in A \}$.

\begin{lemma}\label{eight}
Let $G$ be a Camina $p$-group of nilpotence class $3$.  If $A = A(a)$ for some element $a \in G \setminus G'$, then $G_3 = [a,A]$.
\end{lemma}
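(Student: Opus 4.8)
The plan is to show that the set $[a,A]$ is in fact a subgroup of $G_3$ having exactly the order $|G_3|$. First, since $A = A(a)$, every $x \in A$ satisfies $[a,x] \in G_3$ by the definition of $A(a)$, so the inclusion $[a,A] \subseteq G_3$ is immediate; it remains to prove the reverse inclusion, which I will get by a counting argument once I know $[a,A]$ is a subgroup.

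The key observation is that the map $\phi \colon A \to G_3$ defined by $\phi(x) = [a,x]$ is a group homomorphism. To see this, expand $[a,xy] = [a,y]\,[a,x]^y$ for $x,y \in A$. Because $x \in A$ forces $[a,x] \in G_3$, and $G_3 = Z(G)$ by Lemma \ref{basic2}, the element $[a,x]$ is central; hence $[a,x]^y = [a,x]$, and moreover the two central factors commute, so $[a,y][a,x] = [a,x][a,y]$. Therefore $\phi(xy) = \phi(x)\phi(y)$, confirming that $\phi$ is a homomorphism. Its image is precisely the set $[a,A]$, which is thus a subgroup of $G_3$, and its kernel is $C_A(a) = \{ x \in A : [a,x] = 1 \}$.

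It now suffices to count. Any $x$ with $[a,x] = 1$ automatically lies in $A(a) = A$, so $C_G(a) \le A$ and hence $\ker \phi = C_A(a) = C_G(a)$. Since $G$ is a Camina group and $a \in G \setminus G'$, the conjugacy class of $a$ is $aG'$, giving $|G:C_G(a)| = |aG'| = |G'|$. Combining this with $|G:A| = |G':G_3|$ from Corollary \ref{basic5}, and using the chain $C_G(a) \le A \le G$, I obtain
\[
|[a,A]| = |A : C_G(a)| = \frac{|G:C_G(a)|}{|G:A|} = \frac{|G'|}{|G':G_3|} = |G_3|.
\]
As $[a,A]$ is a subgroup of $G_3$ of order $|G_3|$, we conclude $[a,A] = G_3$.

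The step requiring the most care is the verification that $\phi$ is a homomorphism: this rests entirely on the values $[a,x]$ landing in the center $G_3$, which simultaneously trivializes the conjugation correction term (so $[a,x]^y = [a,x]$) and guarantees the factors commute. Once $\phi$ is known to be a homomorphism, its image is automatically a subgroup and the rest is order bookkeeping, where the Camina hypothesis (supplying the class size of $a$) and Corollary \ref{basic5} carry the weight.
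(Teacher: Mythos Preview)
Your proof is correct and follows essentially the same route as the paper: both show $C_G(a) \le A$, invoke the Camina hypothesis and Corollary~\ref{basic5} to compute $|A:C_G(a)| = |G_3|$, and conclude from $[a,A] \subseteq G_3$. The only cosmetic difference is that you verify $x \mapsto [a,x]$ is a homomorphism (using $G_3 = Z(G)$) so that $[a,A]$ is a subgroup, whereas the paper instead uses the bijection $a^x \mapsto [a,x]$ between $\mathrm{cl}_A(a)$ and $[a,A]$ to get the cardinality directly.
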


\begin{proof}
Since $C_G (a)$ centralizes $a$ and $G_3 \le C_G (a)$, we see that $C_G (a)/G_3$ will centralize $aG_3$, and so, $C_G (a) \le A(a) = A$.  We know from Corollary \ref{basic5} that $|G:A| = |G':G_3|$.  The usual counting argument yields $|G:C_G (a)| = |{\rm cl}_G (a)| = |aG'| = |G'| = |G:A| |G_3|$.  It follows that $|{\rm cl}_A (a)| = |A:C_G(a)| = |G_3|$.  Since the map $a^x \mapsto [a,x] = a^{-1}a^x$ is a injection from ${\rm cl}_A (a)$ to $[a,A]$, we have $|[a,A]| \ge |G_3|$.  On the hand, since $A/G_3 = C_{G/G_3} (aG_3)$, we know that $[a,A] \le G_3$, and this implies that $G_3 = [a,A]$.
\end{proof}

We now obtain an observation distinguishing the subgroups in ${\mathcal A}^* (G)$ from the remaining subgroups in ${\mathcal A} (G)$.

\begin{lemma}\label{nine}
Let $G$ be a Camina $p$-group of nilpotence class $3$.  If $A \in {\mathcal A} (G)$, then $A \in {\mathcal A}^* (G)$ if and only if $A' = G_3$.
\end{lemma}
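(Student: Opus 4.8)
The plan is to observe that $A \in {\mathcal A}^* (G)$ holds precisely when $A/G_3$ is abelian, which in turn is equivalent to $A' \le G_3$. Given this reformulation, one implication is immediate: if $A' = G_3$, then certainly $A' \le G_3$, so $A/G_3$ is abelian and $A \in {\mathcal A}^* (G)$. The entire content of the lemma therefore lies in the reverse direction, namely showing that membership in ${\mathcal A}^* (G)$ forces the containment $G_3 \le A'$.

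For this nontrivial direction, I would suppose $A \in {\mathcal A}^* (G)$. Since $G$ has nilpotence class exactly $3$ we have $G_3 < G'$, and since $|A:G'| = |G':G_3| > 1$ by Corollary \ref{basic5}, the set $A \setminus G'$ is nonempty; fix an element $a$ in it. Because $A/G_3$ is abelian, Lemma \ref{two}(1) gives $A = A(a)$. Now I would invoke Lemma \ref{eight}, which asserts that $G_3 = [a, A]$ as a set. Since $a \in A$, every commutator $[a,x]$ with $x \in A$ lies in the derived subgroup $A'$, so the set $[a,A]$ is contained in $A'$. Combining these, $G_3 = [a,A] \subseteq A'$, hence $G_3 \le A'$. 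Together with $A' \le G_3$ this yields $A' = G_3$, completing the proof.

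There is no real obstacle here: the argument is a short combination of Lemma \ref{two}(1), used to replace an arbitrary $A \in {\mathcal A}^* (G)$ by an explicit $A(a)$ with $a \in A$, and Lemma \ref{eight}, used to produce all of $G_3$ as commutators $[a,x]$ with both entries in $A$. The only point requiring a word of care is that one must choose $a$ inside $A$ rather than merely in $G \setminus G'$, so that the commutators genuinely land in $A'$; this is exactly what Lemma \ref{two}(1) makes possible once we know $A/G_3$ is abelian, and the nonemptiness of $A \setminus G'$ is guaranteed by the class-$3$ hypothesis through $G_3 < G'$.
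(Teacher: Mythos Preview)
Your proof is correct and follows essentially the same approach as the paper: use Lemma~\ref{two}(1) to write $A = A(a)$ for some $a \in A \setminus G'$, then apply Lemma~\ref{eight} to obtain $G_3 = [a,A] \subseteq A'$, while the hypothesis $A \in {\mathcal A}^*(G)$ gives the reverse containment $A' \le G_3$. The only difference is that you spell out the nonemptiness of $A \setminus G'$ explicitly, which the paper leaves implicit.
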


\begin{proof}
If $A' = G_3$, then $A/G_3$ is abelian, and $A \in {\mathcal A}^* (G)$.  Conversely, suppose $A \in {\mathcal A}^* (G)$.  We know that if $a \in A \setminus G'$, then $A = A(a)$.  Applying Lemma \ref{eight}, we have $G_3 = [a,A] \le A'$.  On the other hand, since $A \in {\mathcal A}^* (G)$, we have $A/G_3$ is abelian, so $A' \le G_3$.  We conclude that $A' = G_3$.
\end{proof}

Note that this next theorem includes Theorem \ref{mainfour} from the Introduction.  Following \cite{GCG}, we say that a group $G$ is a VZ-group if every nonlinear irreducible character of $G$ vanishes off of the center of $G$.

\begin{theorem} \label{ten}
Let $G$ be a Camina $p$-group of nilpotence class $3$.  Suppose that $C = C_G (G') > G'$. Then:
\begin{enumerate}
\item $C' = G_3$ and $Z(C) = G'$.
\item $C$ is a VZ-group.
\item $|G_3| \le |G':G_3|^{1/2}$.
\end{enumerate}
\end{theorem}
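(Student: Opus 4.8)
\emph{Parts (1) and (2)} should fall out of the machinery already in place, and \emph{Part (3)} is the crux. For Part (1): since $C > G'$, Lemma~\ref{four} gives $\mathcal{C}(G) = \{C\}$, so in particular $C \in \mathcal{C}(G) \subseteq \mathcal{A}^*(G)$, and then Lemma~\ref{nine} yields $C' = G_3$. For $Z(C) = G'$ I would argue: $G' \le Z(C)$ because $C$ centralizes $G'$ by definition, while conversely any $z \in C \setminus G'$ has $A(z) = C$ by Lemma~\ref{two}(1), so Lemma~\ref{eight} gives $[z,C] = G_3 \neq 1$ and hence $z \notin Z(C)$; thus $Z(C) \le G'$ and equality holds. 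I also record that $Z(C) = G'$ forces $G' \le C = C_G(G')$, so $G'$ is abelian; this will be needed in Part (3).

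For Part (2), Part (1) gives $C' = G_3 \le G' = Z(C)$, so $C$ has class $2$. Fix a nonlinear $\chi \in \irr{C}$; then $C' \not\le \ker\chi$, so the central character $\lambda$ of $\chi$ restricts nontrivially to $C' = G_3$ and $\sum_{z \in C'}\lambda(z) = 0$. For $g \in C \setminus G' = C \setminus Z(C)$ we have $A(g) = C$, so Lemma~\ref{eight} shows the $C$-class of $g$ is the full coset $gC'$. Using $\chi(gz) = \lambda(z)\chi(g)$ for $z \in C' \le Z(C)$, I then get $|C'|\,\chi(g) = \sum_{z\in C'}\chi(gz) = \chi(g)\sum_{z\in C'}\lambda(z) = 0$, so $\chi(g) = 0$. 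Hence every nonlinear character vanishes off $Z(C)$ and $C$ is a VZ-group.

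For Part (3), since a VZ-group satisfies $\chi(1)^2 = |C:Z(C)|$ for nonlinear $\chi$, and Corollary~\ref{vanone} gives $|G:C| = |G':G_3|$ and thus $|C:G'| = |Z(C):C'| = |G':G_3|$, the target inequality is equivalent to $|C'|^2 \le |C:Z(C)|$, a bound on the VZ-group $C$. This bound is \emph{false} for general VZ-groups, so the argument must exploit that $C$ sits inside the class-$3$ group $G$. My plan is to pass to the commutator maps $\beta_1 : (G/G') \times (G/G') \to G'/G_3$ and $\beta_2 : (G/G') \times (G'/G_3) \to G_3$, the latter well defined because $G'$ is abelian. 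By Lemma~\ref{basic1}, $G/G_3$ is ultraspecial, hence semiextraspecial, so $\beta_1$ is nondegenerate and every nonzero scalar contraction of $\beta_1$ is nondegenerate; moreover the left radical of $\beta_2$ is exactly $C/G'$, which is a Lagrangian for $\beta_1$ since $C/G_3$ is abelian of maximal order. The two maps are coupled by the Jacobi (Hall--Witt) identity
\[
\beta_2\bigl(x, \beta_1(y,z)\bigr) + \beta_2\bigl(y, \beta_1(z,x)\bigr) + \beta_2\bigl(z, \beta_1(x,y)\bigr) = 0 .
\]
Contracting this with a nonzero functional on $G_3$ and using that $C/G'$ annihilates $\beta_2$ produces, for each such functional, an alternating $3$-form on $G/G'$ that vanishes whenever an argument lies in $C/G'$, equivalently a family of symmetric $G_3$-valued forms attached to the Lagrangian $C/G'$. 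Feeding the nondegeneracy of the contractions of $\beta_1$ and of $\beta_2$ into these forms should force $\dim_{\mathbb{F}_p} G_3 \le \tfrac12 \dim_{\mathbb{F}_p}(G'/G_3)$, which is exactly $|G_3| \le |G':G_3|^{1/2}$.

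The main obstacle is extracting the precise factor $\tfrac12$, i.e.\ the square root. Nondegeneracy of the relevant forms by itself yields only the weak bound $\dim G_3 \le \dim(G'/G_3) - 1$, and the naive induction on $|G_3|$ (quotient by a subgroup $N \le G_3$ of index $p$, with base case $|G_3| = p$ supplied by Lemma~\ref{fixone} through Corollary~\ref{vanone}) proves the inequality only up to one extra factor of $p$, since such a quotient fixes $|G':G_3|$ while dropping $\dim G_3$ by one. Closing this gap is precisely where the semiextraspecial coupling of $\beta_1$ and $\beta_2$ via the Jacobi identity must be used in full, and I expect this step to require the most care.
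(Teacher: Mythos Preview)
Your treatment of Parts (1) and (2) is correct and essentially coincides with the paper's argument: the paper likewise uses Lemmas~\ref{fixthree}, \ref{nine}, and \ref{eight} to obtain $C'=G_3$ and $Z(C)=G'$, and then deduces the VZ property from the fact that every non-central $C$-class is a full coset of $C'$ (citing Lemma~2.2 of \cite{GCG} in place of your direct character computation).

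The gap is in Part (3). Your reduction to the inequality $|C'|^{2}\le |C:Z(C)|$ is exactly right, but your pivotal claim that this bound ``is \emph{false} for general VZ-groups'' is incorrect. The inequality $|C'|^{2}\le |C:Z(C)|$ holds for \emph{every} VZ $p$-group, and this is precisely Lemma~2.4 of \cite{GCG}; the paper's proof of Part~(3) consists of a one-line citation of that lemma. To see why the bound holds: in a VZ $p$-group one has $C'\le Z(C)$, and from $[g,C]=C'$ for all $g\notin Z(C)$ one checks that both $C'$ and $C/Z(C)$ are elementary abelian and that, for every nonzero linear functional $\phi$ on $C'$, the contraction $\phi\circ[\,\cdot\,,\,\cdot\,]$ is a nondegenerate alternating form on $C/Z(C)$. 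Writing $|C:Z(C)|=p^{2m}$ and $|C'|=p^{k}$, one thus obtains a $k$-dimensional linear family of alternating forms on a $2m$-dimensional $\mathbb{F}_p$-space in which every nonzero member is nondegenerate, i.e.\ has nonvanishing Pfaffian; since the Pfaffian has degree $m$, Chevalley--Warning forces $k\le m$. This is the same mechanism behind the familiar bound $|G'|^{2}\le|G:G'|$ for semiextraspecial groups quoted in Section~\ref{prelim}.

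Consequently the coupling of $\beta_1$ and $\beta_2$ through the Hall--Witt identity is unnecessary, and the ``obstacle'' you flag in your final paragraph never arises: once Parts~(1) and~(2) are established, Part~(3) is an immediate consequence of a known inequality for VZ-groups, with no further use of the ambient class-$3$ structure.
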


\begin{proof}
We know via Lemma \ref{one} that $C = A(a)$ for every $a \in C \setminus G'$.  By Lemma \ref{fixthree}, we know that $C/G_3$ is abelian; so by Lemma \ref{nine} we have that $C' = G_3$.  Since $C = C_G (G')$, it follows that $G' \le Z (C)$.  On the other hand, if $a \in C \setminus G'$, then Lemma \ref{eight} yields $[a,C] = G_3 \ne 1$, so $a \not\in Z (C)$.  We deduce that $Z (C) \le G'$, and so, $G' = Z(C)$.  We now have that ${\rm cl}_C (a) = aC'$ for every element $a \in C \setminus Z(C)$, so applying Lemma 2.2 of \cite{GCG}, we have that $C$ is a VZ-group.  By Theorem \ref{vanishing}, we have that $|C:Z(C)| = |C:G'| = |G':G_3|$.  Applying Lemma 2.4 of \cite{GCG}, we obtain $|G_3| = |C'| \le |C:Z(C)|^{n/2} = |G':G_3|^{1/2}$.
\end{proof}

We now apply this result to obtain the following corollary when $|G_3| > |G':G_3|^{1/2}$.

\begin{corollary} \label{ten1}
Suppose $G$ is a Camina $p$-group with nilpotence class $3$.  If $|G_3| > |G':G_3|^{1/2}$, then $C_G (G') = G'$.
\end{corollary}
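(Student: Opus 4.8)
The plan is to observe that this corollary is simply the contrapositive of conclusion (3) of Theorem \ref{ten}, so almost no new work is required. The one preliminary point I would make explicit is that $C_G(G')$ always contains $G'$: since $G$ has nilpotence class $3$, the standard lower-central-series containment $[G_i, G_j] \le G_{i+j}$ gives $[G', G'] = [G_2, G_2] \le G_4 = 1$, so $G'$ is abelian and hence $G' \le C_G(G')$. This fact is already implicit throughout the paper, for instance in the dichotomy ``$|G:C| = |G:G'|^{1/2}$ or $C = G'$'' of Corollary \ref{vanone}, where $C = C_G(G')$. Consequently, for any such $G$ exactly one of $C_G(G') = G'$ or $C_G(G') > G'$ can hold.

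With this in hand, I would argue by contradiction. Suppose $|G_3| > |G':G_3|^{1/2}$ but $C_G(G') \ne G'$. Then by the preceding remark we must have $C_G(G') > G'$, so the hypothesis of Theorem \ref{ten} is satisfied. Conclusion (3) of that theorem then yields $|G_3| \le |G':G_3|^{1/2}$, directly contradicting our assumption. Therefore $C_G(G') = G'$, as claimed.

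Since the entire argument is a one-line logical inversion of an already-proved theorem, there is no genuine obstacle to overcome. The only thing worth flagging is the routine observation that $G'$ is abelian, which is what guarantees that the negation of ``$C_G(G') > G'$'' is precisely ``$C_G(G') = G'$'' and not some other alternative; without that remark the contrapositive would not deliver the stated equality.
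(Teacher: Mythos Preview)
Your proposal is correct and follows essentially the same route as the paper: both take the contrapositive of Theorem~\ref{ten}(3) to rule out $C_G(G') > G'$, and then upgrade this to the equality $C_G(G') = G'$. The only cosmetic difference is that you justify $G' \le C_G(G')$ directly from $[G',G'] \le G_4 = 1$, whereas the paper cites Corollary~\ref{vanone} for the same conclusion.
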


\begin{proof}
By the contrapositive of Theorem \ref{ten} (3), we see that $|G_3| > |G':G_3|^{1/2}$ implies that $C_G (G') \le G'$.  By Corollary \ref{vanone}, $C_G (G') = G'$.
\end{proof}

We now use the examples in Theorem 2.10 of \cite{MaSc} and Section 4 of \cite{DaSc} to show that there exist Camina $p$-groups of nilpotence class $3$ with $C_G (G') = G'$.  

\begin{lemma}
For every odd prime $p$, there exists a Camina $p$-group of nilpotence class $3$ with $C_G (G') = G'$.
\end{lemma}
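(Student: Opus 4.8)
The plan is to produce, for each odd prime $p$, a concrete group drawn from the families constructed in Theorem 2.10 of \cite{MaSc} and in Section 4 of \cite{DaSc}, and then to invoke the machinery already developed rather than analyzing the centralizer by hand. Specifically, by Corollary \ref{ten1} it suffices to exhibit, for every odd $p$, a Camina $p$-group $G$ of nilpotence class $3$ satisfying the \emph{strict} inequality $|G_3| > |G':G_3|^{1/2}$; any such group automatically has $C_G(G') = G'$. This reduces the whole problem to a bookkeeping question about the orders of $G_3$ and $G'/G_3$ in the cited examples and completely avoids computing $C_G(G')$ directly.

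First I would recall that by Lemma \ref{basic1} the quantity $|G':G_3|$ is a square, say $|G':G_3| = p^n$ with $n$ even, so that the target inequality reads $|G_3| > p^{n/2}$. Since Lemma \ref{fixtwo} shows the desired phenomenon cannot occur when $|G_3| = p$, the examples must have $|G_3| \ge p^2$; the smallest admissible parameters are $|G':G_3| = p^2$ and $|G_3| = p^2$ (note that the Dark--Scoppola bound $|G_3| < |G':G_3|^{3/2} = p^3$ then forces $|G_3| = p^2$), giving a group of order $p^8$. I would then read off from the explicit presentation of the chosen family the subgroups $G' = Z_2(G)$ and $G_3 = Z(G) = [G',G]$, identified via Lemma \ref{basic2}, compute their orders as functions of $p$, and confirm that the construction goes through for every odd prime, so that all odd $p$ are handled simultaneously by one uniform family.

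The verification is then routine in spirit: the references already establish that these groups are Camina $p$-groups of nilpotence class $3$, so I would only need to pin down the two exponents, check the strict inequality $|G_3| > |G':G_3|^{1/2}$, and apply Corollary \ref{ten1} to conclude $C_G(G') = G'$.

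The main obstacle I anticipate is the explicit commutator bookkeeping inside the given presentations: determining $|G_3|$ and $|G':G_3|$ \emph{exactly} (rather than merely up to a bound) is essential because the inequality is strict, and one must check that the computation is genuinely uniform in $p$ and does not secretly collapse to the excluded case $|G_3| = p$ for some small prime. A secondary subtlety is simply locating, within each cited family, a member whose parameters satisfy $|G_3| > |G':G_3|^{1/2}$: if the most transparent examples instead sit at the boundary $|G_3| = |G':G_3|^{1/2}$, then Corollary \ref{ten1} no longer applies, and one would have to pass to a larger member of the family or else compute $C_G(G')$ directly from the defining relations by showing that no element of $G \setminus G'$ centralizes $G'$.
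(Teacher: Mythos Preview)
Your proposal is correct and follows exactly the paper's own argument: the paper's proof consists of a single sentence invoking Corollary~\ref{ten1} to reduce the claim to the existence of a Camina $p$-group of class~$3$ with $|G_3| > |G':G_3|^{1/2}$, and then cites Theorem~2.10 of \cite{MaSc} and Section~4 of \cite{DaSc} for such groups. Your additional cautionary remarks about verifying the exact orders and checking uniformity in $p$ are prudent but go beyond what the paper itself records.
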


\begin{proof}
By Corollary \ref{ten1}, we just need a Camina $p$-group of nilpotence class $3$ with $|G_3| > p^{n/2}$ where $|G:G'| = p^{2n}$.  Such groups are constructed in Theorem 2.10 of \cite{MaSc} and Section 4 of \cite{DaSc}.
\end{proof}

\section {${\mathcal C} (G)$}

We begin this section with some general results.  Let $G$ be a group.  A set of subgroups $\{ A_1, \dots, A_k \}$ is a {\it partition} of $G$ if $G = \bigcup_{i=1}^k A_i$ and $A_i \cap A_j = 1$ whenever $i \ne j$.  We say the partition is nontrivial if $k > 1$ and $A_i > 1$ for all $i = 1, \dots, k$.

\begin{lemma} \label{countone}
Let $A$ be an elementary abelian $p$-group of order $p^n$.  Let $\{ A_1, \dots, A_k \}$ be a nontrivial partition of $A$.  If $n = 2l$ for some integer $l$, then $k \ge p^l + 1$ with equality occurring if and only if all $|A_i| = p^l - 1$ and if $n = 2l + 1$ for some integer $l$, then $k > p^l + 1$.
\end{lemma}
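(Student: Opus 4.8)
The plan is to reduce to a counting argument over the nonzero vectors of the vector space $A = \mathbb{F}_p^n$. Since $\{A_1, \dots, A_k\}$ partitions $A$ and each $A_i$ is a subgroup, the parts overlap only in the identity, so counting nonidentity elements gives $\sum_{i=1}^k (|A_i| - 1) = p^n - 1$. Writing $|A_i| = p^{m_i}$ with each $m_i \geq 1$ (the partition is nontrivial), the equation becomes $\sum_{i=1}^k (p^{m_i} - 1) = p^n - 1$, which I would rearrange to $\sum_{i=1}^k p^{m_i} = p^n - 1 + k$. The goal is to extract a lower bound on $k$ from this single Diophantine constraint.

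The first real step is to establish the elementary inequality $p^{m_i} - 1 \leq (p^{m_i/2})(p^{l} - 1)/p^{?}$ — more cleanly, I would instead bound the size of each part from above. The key structural fact I would invoke is that in an elementary abelian group of order $p^n$ admitting a nontrivial partition, no proper subgroup in the partition can be too large: if some $A_i$ had order exceeding $p^{\lceil n/2 \rceil}$, then two distinct such large parts would be forced to intersect nontrivially by the standard dimension inequality $\dim(A_i \cap A_j) \geq \dim A_i + \dim A_j - n > 0$, contradicting disjointness once $k \geq 2$. Hence at most one part can have dimension exceeding $n/2$, and in the extremal regime every part has dimension at most $l = \lfloor n/2 \rfloor$. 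I would then argue that to cover all $p^n - 1$ nonzero vectors with parts each of size at most $p^l$, one needs $k \geq (p^n - 1)/(p^l - 1)$ parts (since each contributes at most $p^l - 1$ nonzero elements).

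For the case $n = 2l$, the bound $(p^{2l} - 1)/(p^l - 1) = p^l + 1$ gives exactly $k \geq p^l + 1$, and equality in the covering count forces every $|A_i| - 1 = p^l - 1$, i.e.\ every part has order $p^l$; this is the announced equality condition. For $n = 2l+1$, the ceiling is $l+1$, but the dimension argument shows at most one part can reach dimension $l+1$ while the rest have dimension at most $l$; I would split off that one large part (size at most $p^{l+1}-1$ nonzero elements) and cover the remaining at least $p^{2l+1} - p^{l+1}$ nonzero vectors by parts of size at most $p^l - 1$ each, yielding $k - 1 \geq (p^{2l+1} - p^{l+1})/(p^l - 1) = p^{l+1}$, hence $k > p^l + 1$. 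One must also handle the subcase where \emph{no} part reaches dimension $l+1$, which only makes $k$ larger.

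The main obstacle I anticipate is the equality analysis in the even case together with making the odd case genuinely strict rather than merely $k \geq p^l + 1$. The clean covering count $k \geq (p^n-1)/(p^l-1)$ is not quite enough on its own to force a \emph{strict} inequality when $n$ is odd, so the careful separation of the at-most-one oversized part is essential; I expect the bookkeeping there, rather than any deep idea, to be where the argument needs the most care. A cleaner alternative worth checking is to work entirely with the rearranged sum $\sum p^{m_i} = p^n - 1 + k$ modulo suitable powers of $p$ and argue directly about the count of parts of each dimension, which may sidestep the geometric intersection argument; but the dimension-counting route above is the most transparent and I would present that first.
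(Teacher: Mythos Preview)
Your approach is essentially the paper's: both count nonidentity elements via $\sum (|A_i|-1)=p^n-1$, split into the cases ``some part has dimension exceeding $l$'' versus ``all parts have dimension at most $l$,'' and in the latter case read off $k\ge (p^n-1)/(p^l-1)$. Two imprecisions to clean up: in the even case you must actually treat the ``one large part'' subcase rather than dismiss it as non-extremal (the paper does this by noting that if $|A_1|=p^b$ with $b\ge l+1$ then every other $|A_i|\le p^{n-b}$, which yields $k-1\ge p^b>p^l$ directly); and in the odd case your claim that the large part has at most $p^{l+1}-1$ nonzero elements is false (its dimension can be anything up to $2l$), so use the sharper bound $|A_i|\le p^{n-b}$ on the remaining parts as the paper does, which gives $k-1\ge p^b$ and the strict inequality immediately.
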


\begin{proof}
Since the $A_i$'s form a partition, we have that $|G| - 1 = \sum_{i=1}^k (|A_i| - 1)$.  Notice that for each $i$, the order $|A_i|$ is a power of $p$ between $p$ and $p^{n-1}$.  For $j = 1, \dots, n-1$, we set $a_j = |\{ i \mid |A_i| = p^j \}|$.  I.e., $a_j$ is the number of $A_i$'s that have order $p^j$.  It follows that $|G| - 1 = p^n - 1 = \sum_{j=1}^{n-1} a_j (p^j - 1)$ and $k = \sum_{j=1}^{n-1} a_j$.  Without loss of generality, we may assume that $|A_1| \ge |A_i|$ for all $i = 2, \dots, k$.

Suppose first that $|A_1| > p^l$.  Thus, we can write $|A_1| = p^b$ where $b \ge l +1$.  We know that $A_1 \cap A_i = 1$ for $i = 2, \dots, k$.  This implies that $|A_i| = |A_1A_i|/|A_1| \le |P|/|A_1| = p^n/p^b = p^{n-b}$.  This implies that $a_b = 1$, $a_j = 0$ when $j > n-b$ and $j \ne b$, and $\sum_{j=1}^{n-b} a_j = k-1$.  We now have
$$
|G| - 1 = p^n - 1 = (p^b - 1) + \sum_{j=1}^{n-b} a_j (p^j - 1) \le (p^b - 1) + (k-1) (p^{n-b} - 1).
$$
Rearranging this equation, we have $(p^n - 1) - (p^b - 1) \le (p^{n-b} - 1)$.  Observe that $(p^n - 1) - (p^b - 1) = p^b (p^{n-b} - 1)$.  This yields $p^b (p^{n-b} - 1) \le (k-1) (p^{n-b} - 1)$.  Dividing by $p^{n-b} - 1$, we obtain $p^b \le k- 1$.  We conclude that $p^l + 1 < p^b + 1 \le k$, as desired.  Notice that we cannot have equality in this case.

Thus, we may now assume that $|A_1| \le p^l$, and so, $|A_i| \le p^l$ for $i = 1, \dots, k$.  This implies that $a_j = 0$ when $j > l$.  We now have
$$
|G| - 1 = p^n - 1 = \sum_{j=1}^l a_j (p^j - 1) \le k (p^l - 1).
$$
Dividing by $p^l - 1$, we see that $k \ge (p^n - 1)/(p^l - 1)$.  If $n = 2l$, then $(p^n-1)/(p^l-1) = (p^{2l} - 1)/(p^l - 1) = p^l + 1$.  This yields $k \ge p^l + 1$ when $n = 2l$.  We obtain equality if and only if $|A_i| = p^l - 1$ for all $i = 1,\dots, k$.  If $n = 2l + 1$, then $k \ge (p^{2l+1}-1)/(p^l - 1) > (p^{2l} - 1)/(p^l-1) = p^l + 1$.  This completes the proof.
\end{proof}

We now obtain a dual result for Lemma \ref{countone}.

\begin{lemma} \label{counttwo}
Let $M$ be an elementary abelian $p$-group of order $p^n$.  Let $M_1, \dots, M_k$ be proper, nontrivial subgroups of $M$ that satisfy the following two conditions: (1) $M = M_i M_j$ whenever $1 \le i \ne j \le k$ and (2) if $N < M$ with $|M:N| = p$, then there exists some $i$ with $1 \le i \le k$ so that $M_i \le N$.  If $n = 2l$, then $k \ge p^l + 1$ and if $n = 2l+1$, then $k > p^l + 1$.
\end{lemma}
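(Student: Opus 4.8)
The plan is to exploit the duality between the hypotheses of this lemma and those of Lemma \ref{countone}: conditions (1) and (2) are precisely the conditions defining a partition, read off in the dual space. View $M$ as a vector space over the field $\mathbb{F}_p$ with $p$ elements, and let $M^* = \operatorname{Hom}(M, \mathbb{F}_p)$ be its dual, which is again elementary abelian of order $p^n$. For a subspace $W \le M$, write $W^{\perp} = \{ \varphi \in M^* \mid \varphi(W) = 0 \}$ for its annihilator. I will use the standard facts that $\dim W + \dim W^{\perp} = n$, that $W \mapsto W^{\perp}$ is an inclusion-reversing bijection from the subspaces of $M$ onto the subspaces of $M^*$, and that $(U + V)^{\perp} = U^{\perp} \cap V^{\perp}$.

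First I would set $L_i = M_i^{\perp}$ for each $i$ and check that the hypotheses on the $M_i$ translate into the statement that $\{ L_1, \dots, L_k \}$ is a nontrivial partition of $M^*$. Since each $M_i$ is proper and nontrivial, each $L_i$ is nontrivial and proper. Condition (1) says $M = M_i + M_j$ for $i \ne j$, and applying the annihilator gives $L_i \cap L_j = (M_i + M_j)^{\perp} = M^{\perp} = 0$; in particular the $L_i$ are pairwise distinct. For condition (2), I would use that $N \mapsto N^{\perp}$ matches the hyperplanes $N < M$ of index $p$ bijectively with the one-dimensional subspaces (lines) of $M^*$, and that $M_i \le N$ if and only if $N^{\perp} \le L_i$. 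Thus condition (2) says that every line of $M^*$ is contained in some $L_i$; since every nonzero element of $M^*$ spans a line, this is equivalent to $\bigcup_{i=1}^k L_i = M^*$.

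It remains to confirm the partition is nontrivial, that is, that $k > 1$: if $k = 1$, condition (2) would force the nontrivial subgroup $M_1$ into every hyperplane, hence into their intersection, which is trivial, a contradiction. Having established that $\{L_1, \dots, L_k\}$ is a nontrivial partition of the elementary abelian $p$-group $M^*$ of order $p^n$, I would finish by applying Lemma \ref{countone} to $M^*$: it yields $k \ge p^l + 1$ when $n = 2l$ and $k > p^l + 1$ when $n = 2l + 1$, which is exactly the desired conclusion.

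The only real work, and the step I would be most careful about, is the faithful translation of condition (2): one must observe that quantifying over all index-$p$ subgroups $N$ of $M$ is the same, under annihilation, as quantifying over all lines of $M^*$, and that containment reverses, so that ``some $M_i$ lies in every hyperplane'' becomes ``every line lies in some $L_i$,'' which covers $M^*$. Everything else is the standard duality dictionary for subspaces of a finite-dimensional $\mathbb{F}_p$-vector space, and the counting is entirely delegated to Lemma \ref{countone}.
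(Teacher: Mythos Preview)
Your argument is correct and is essentially the same as the paper's: both dualize and apply Lemma~\ref{countone}, the only difference being that the paper phrases the dual as $\irr M$ and the annihilators as $\irr{M/M_i}$, whereas you use $M^* = \operatorname{Hom}(M,\mathbb{F}_p)$ and $M_i^{\perp}$, which are the same objects for an elementary abelian $p$-group. Your explicit verification that $k>1$ is a small bonus over the paper's proof, where this is left implicit.
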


\begin{proof}
By Lemma \ref{countone}, it suffices to show that the following sets of characters $\{ \irr {M/M_1}, \dots, \irr {M/M_k} \}$ is a nontrivial partition of $\irr M$.  Suppose $i \ne j$ and $\lambda \in \irr {M/M_i} \cap \irr {M/M_j}$.  This implies that $M_i \le \ker {\lambda}$ and $M_j \le \ker {\lambda}$.  We deduce that $M_i M_j \le \ker {\lambda}$.  Since $i \ne j$, we have $G = M_i M_j$, and so, $M \le \ker {\lambda}$.  Thus, $\lambda = 1_M$, and we see that $\irr {M/M_i} \cap \irr {M/M_j} = \{ 1_M \}$.  For any $\lambda \in \irr M$, we know that $|M:\ker {\lambda}| = p$.  Thus, there is an integer $i$ with $1 \le i \le k$ so that $M_i \le \ker {\lambda}$, and so, $\lambda \in \irr {M/M_i}$.  We conclude that $\irr M = \bigcup_{i=1}^k \irr {M/M_i}$.  Since $M_i < M$ for each $i$, we see that $\irr {M/M_i} > \{ 1_M \}$ for all $i$, and since $M_i > 1$, we have $\irr {M/M_i} < \irr M$ for all $i$. Therefore, we have the desired nontrivial partition, and the lemma is proved.
\end{proof}

We now consider the case where $G$ is a Camina group of nilpotence class $3$ and ${\mathcal C} (G)$ contains at least two elements.

\begin{lemma}\label{twob}
Let $G$ be a Camina $p$-group of nilpotence class $3$.  If $C_1, C_2 \in {\mathcal C} (G)$ satisfy $C_1 \ne C_2$, then $G_3 = [C_1,G'] [C_2,G']$
\end{lemma}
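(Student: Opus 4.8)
The plan is to argue by contradiction, using the membership criterion for ${\mathcal C}(G)$ supplied by Lemma \ref{twoa}(1). First I would record the hypotheses needed to invoke that lemma. Since ${\mathcal C}(G) \subseteq {\mathcal A}(G)$ by Lemma \ref{two}(3), both $C_1$ and $C_2$ belong to ${\mathcal A}(G)$, so Lemma \ref{twoa} applies to each of them. Writing $M_i = [C_i, G']$ (which equals $[G', C_i]$ as a subgroup, so matches the notation of Lemma \ref{twoa}), the content I will use is the following: for an index-$p$ subgroup $N < G_3$, Lemma \ref{twoa}(1) gives $C_i = C(N)$ if and only if $M_i \le N$.

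Next I would set $M = M_1 M_2 = [C_1, G'][C_2, G']$. Because $G_3$ is abelian, $M$ is genuinely a subgroup of $G_3$, and the goal is exactly $M = G_3$. I would suppose instead that $M < G_3$. Since $G_3 = Z(G)$ has exponent $p$ (recalled just before Lemma \ref{fixone}), $G_3$ is elementary abelian, so the proper subgroup $M$ is contained in some $N < G_3$ with $|G_3 : N| = p$ (the preimage in $G_3$ of an index-$p$ subgroup of the nontrivial quotient $G_3/M$). Then $M_1 \le M \le N$ and $M_2 \le M \le N$, so applying Lemma \ref{twoa}(1) to $C_1$ yields $C_1 = C(N)$, and applying it to $C_2$ yields $C_2 = C(N)$. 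But $C(N)$ is, by its definition via $C(N)/N = C_{G/N}(G'/N)$, a single subgroup determined by $N$; hence $C_1 = C(N) = C_2$, contradicting $C_1 \ne C_2$. Therefore $M = G_3$, which is the asserted identity.

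I do not expect a real obstacle here, since the whole argument turns on one observation: if the two commutator subgroups failed to generate $G_3$, then a single index-$p$ subgroup $N$ could absorb both of them simultaneously, forcing the well-defined subgroup $C(N)$ to coincide with both $C_1$ and $C_2$. The only points deserving care are the bookkeeping that lets me apply Lemma \ref{twoa} (namely $\mathcal{C}(G) \subseteq \mathcal{A}(G)$ from Lemma \ref{two}(3)) and the use of the elementary abelian structure of $G_3$ to produce the hyperplane $N$ containing $M$; both are immediate from results already established in the excerpt.
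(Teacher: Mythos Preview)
Your proof is correct and follows essentially the same approach as the paper's own proof: argue by contradiction, place $[C_1,G'][C_2,G']$ inside an index-$p$ subgroup $N$ of $G_3$, and invoke Lemma \ref{twoa}(1) to conclude $C_1 = C(N) = C_2$. The only difference is that you spell out a few more details (the elementary abelian structure of $G_3$ and the containment ${\mathcal C}(G) \subseteq {\mathcal A}(G)$), which the paper leaves implicit.
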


\begin{proof}
Observe since ${\mathcal C} (G) \subseteq {\mathcal A} (G)$, we have $C_1,C_2 \in {\mathcal A} (G)$.  Suppose $[C_1,G'][C_2,G'] < G_3$.  Then you can find a subgroup $N$ so that $[C_1,G'][C_2,G'] \le N$ and $|G_3:N| = p$.  By Lemma \ref{twoa}, we have $C_1 = C(N) = C_2$ which is a contradiction.  Thus, $G_3 = [C_1,G'][C_2,G']$.
\end{proof}

With this in hand, we obtain a count of ${\mathcal C} (G)$ in terms of $|G_3|$.

\begin{lemma} \label{countthree}
Let $G$ be a Camina $p$-group of nilpotence class $3$.  Suppose that $C_G (G') = G'$ and $|G_3| = p^m$.  If $m = 2l$, then $|{\mathcal C} (G)| \ge p^l + 1$ and if $m = 2l + 1$, then $|{\mathcal C} (G)| > p^l + 1$.
\end{lemma}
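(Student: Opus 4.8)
The plan is to apply the dual counting result, Lemma \ref{counttwo}, to the family of subgroups $[C,G']$ as $C$ ranges over ${\mathcal C} (G)$, living inside the elementary abelian group $M = G_3$. Recall that $G_3 = Z(G)$ is central of exponent $p$, hence elementary abelian, of order $p^m$. Enumerate ${\mathcal C} (G) = \{ C_1, \dots, C_k \}$, so that $k = |{\mathcal C} (G)|$, and for each $i$ set $M_i = [C_i, G'] \le G_3$. Once I verify that $M_1, \dots, M_k$ are proper, nontrivial subgroups of $G_3$ satisfying conditions (1) and (2) of Lemma \ref{counttwo}, the conclusion follows at once by taking $n = m$: if $m = 2l$ then $k \ge p^l + 1$, and if $m = 2l+1$ then $k > p^l + 1$, which is exactly the stated bound on $|{\mathcal C} (G)|$.

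First I would record the hypotheses of Lemma \ref{counttwo}. Since $C_G (G') = G'$, Lemma \ref{four} gives $|{\mathcal C} (G)| \ne 1$, and as ${\mathcal C} (G)$ is nonempty we get $k \ge 2$, so there genuinely are pairs $i \ne j$ to work with. Each $C_i \in {\mathcal C} (G) \subseteq {\mathcal A} (G)$, so $G' \le C_i$ and, by Corollary \ref{basic5}, $|G:C_i| = |G:G'|^{1/2}$; hence $|C_i : G'| = |G:G'|^{1/2} > 1$ and $C_i > G'$. For properness of $M_i$, note $C_i \in {\mathcal C} (G)$ forces $[G',C_i] < G_3$ by Lemma \ref{twoa}(3), i.e. $M_i < G_3$. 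Condition (1), namely $G_3 = M_i M_j$ for $i \ne j$, is precisely Lemma \ref{twob}. For condition (2), take $N < G_3$ with $|G_3:N| = p$; then $C(N) \in {\mathcal C} (G)$, say $C(N) = C_i$, and Lemma \ref{twoa}(1) yields $M_i = [G',C_i] \le N$, as needed.

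The one point that genuinely uses the hypothesis $C_G (G') = G'$, and which I expect to be the only real obstacle, is the nontriviality of each $M_i$. If $M_i = [C_i, G'] = 1$, then $C_i$ centralizes $G'$, so $C_i \le C_G (G') = G'$, contradicting $C_i > G'$ established above; hence $M_i > 1$. (I also note that condition (1) together with properness forces the $M_i$ to be pairwise distinct, since $M_i = M_j$ with $i \ne j$ would give $G_3 = M_i M_j = M_i < G_3$, a contradiction; so $k$ really counts $k$ distinct subgroups.) With every hypothesis of Lemma \ref{counttwo} verified for $M = G_3$ of order $p^m$, the stated bounds on $k = |{\mathcal C} (G)|$ follow directly.
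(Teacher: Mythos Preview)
Your proof is correct and follows essentially the same approach as the paper: both define $M_i = [C_i,G']$ inside $G_3$, verify the hypotheses of Lemma~\ref{counttwo} using Lemmas~\ref{four}, \ref{twoa}, and \ref{twob}, and read off the bound on $k = |{\mathcal C}(G)|$. If anything, your write-up is slightly more careful, since you explicitly justify $C_i > G'$ and observe that the $M_i$ are pairwise distinct.
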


\begin{proof}
Let ${\mathcal C} (G) = \{ C_1, \dots, C_k \}$ where the $C_i$'s are distinct.  By Lemma \ref{four}, since $C_G (G') = G'$, we must have $k > 1$.  Since $C_G (G') = G' < C_i$, we have $1 < [C_i,G']$.  On the other hand, applying Lemma \ref{twoa} (3), we see that $[C_i,G'] < G_3$.  We use Lemma \ref{twob} to see that $[C_i,G'][C_j,G'] = G_3$ whenever $1 \le i \ne j \le k$.  Finally, if $N < G_3$ with $|G_3:N| = p$, then $C(N) \in {\mathcal C} (G)$, so $C (N) = C_i$ for some $i$ and from Lemma \ref{twoa} (1), $[C_i,G'] \le N$.  Thus, the subgroups $[C_1,G'], \dots, [C_k,G']$ satisfy the hypotheses of Lemma \ref{counttwo} in $G_3$.  Now, the conclusion of this lemma is exactly the conclusion of Lemma \ref{counttwo}.
\end{proof}

\begin{lemma} \label{eleven}
Let $G$ be a Camina $p$-group of nilpotence class $3$.  If $|{\mathcal C} (G)| \le p$, then $C_{G} (G') > G'$ and in particular, $|{\mathcal C} (G)| = 1$.
\end{lemma}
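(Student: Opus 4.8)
The plan is to argue by contradiction, using the counting bound of Lemma~\ref{countthree} as the main engine and Lemma~\ref{four} to translate between the size of ${\mathcal C}(G)$ and the condition $C_G(G') > G'$. Suppose $|{\mathcal C}(G)| \le p$ but, toward a contradiction, $C_G(G') = G'$. Since $G' \le C_G(G')$ always and ${\mathcal C}(G)$ is nonempty, Lemma~\ref{four} tells us that this assumption is precisely the statement $|{\mathcal C}(G)| \ne 1$. Write $|G_3| = p^m$; because $G$ has nilpotence class $3$ we have $G_3 > 1$, so $m \ge 1$.

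First I would dispose of the case $m \ge 2$. Whether $m = 2l$ or $m = 2l+1$, the hypothesis $m \ge 2$ forces $l \ge 1$. Lemma~\ref{countthree} then gives $|{\mathcal C}(G)| \ge p^l + 1 \ge p + 1 > p$ (in the odd case the inequality is even strict), which contradicts $|{\mathcal C}(G)| \le p$. Thus no group with $m \ge 2$ and $C_G(G') = G'$ can satisfy the hypothesis.

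The remaining case is $m = 1$, that is $|G_3| = p$, and this is exactly where the counting bound degenerates: for $m = 1$ Lemma~\ref{countthree} only yields $|{\mathcal C}(G)| > 2$, which does not contradict $|{\mathcal C}(G)| \le p$ when $p \ge 3$. Here I would instead invoke the independent input that $|G_3| = p$ already forces $C_G(G') > G'$. Indeed, $Z(G) = G_3$ by Lemma~\ref{basic2} and $G_3$ has exponent $p$, so $|G_3| = p$ means $Z(G)$ is cyclic; Lemma~\ref{fixone} then gives $|C_G(G'):G'| \ge p^2$, whence $C_G(G') > G'$, contradicting our standing assumption $C_G(G') = G'$.

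Having ruled out both $m \ge 2$ and $m = 1$, the assumption $C_G(G') = G'$ is untenable, so $C_G(G') > G'$. Applying Lemma~\ref{four} in the forward direction, $C_G(G') > G'$ yields $|{\mathcal C}(G)| = 1$, which is the ``in particular'' conclusion. I expect the only delicate point to be recognizing that the inequality of Lemma~\ref{countthree} is too weak precisely at $|G_3| = p$, so that this smallest case must be settled by Lemma~\ref{fixone} rather than by counting; everything else is a clean parity-and-size split.
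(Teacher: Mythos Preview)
Your proof is correct and follows essentially the same approach as the paper's: argue by contradiction assuming $C_G(G')=G'$, use Lemma~\ref{countthree} to force $|{\mathcal C}(G)|\ge p+1$ once $m\ge 2$, and handle $m=1$ separately via the $|G_3|=p$ input, then finish with Lemma~\ref{four}. The only cosmetic difference is that the paper cites Lemma~\ref{fixtwo} to exclude $m=1$ up front (so no case split is written out), whereas you treat $m=1$ as a separate case and invoke Lemma~\ref{fixone} directly; since Lemma~\ref{fixtwo} is itself derived from Lemma~\ref{fixone}, the arguments are the same in substance.
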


\begin{proof}
Suppose that $C_G (G') = G'$.  Let $|G_3| = p^m$ where $m \ge 1$ is an integer.  By Lemma \ref{fixtwo}, we know that $C_G (G') = G'$ implies that $m \ge 2$.  Thus, either $m = 2l$ or $m = 2l + 1$ where $l \ge 1$, and $|{\mathcal C} (G)| \ge p^l + 1 > p$ by Lemma \ref{countthree}, a contradiction.  The second conclusion is Lemma \ref{four}.
\end{proof}

We noted before Lemma \ref{twoa}, ${\mathcal A}^* (G)$ is nonempty.  Also, by Lemma \ref{basic1}, $G/G_3$ is an ultraspecial group.  It is not difficult to see that ${\mathcal A}^* (G/G_3)$ will consist of the abelian subgroups of $G/G_3$ having order $|G:G'|$, and recall that ${\mathcal A}^* (G/G_3) = \{ A/G_3 \mid A \in {\mathcal A}^* (G) \}$.  Using the results of Verardi mentioned in the third paragraph of Section \ref{prelim}, we see that $|{\mathcal A}^* (G)|$ is either $1$, $2$ or it has the value $1 + p^h$ for some integer $h$ with $h$ dividing $n$.  %
\begin{theorem} \label{countfour}
Let $G$ be a Camina $p$-group of nilpotence class $3$.  Suppose that $C_G (G') = G'$.  If $|{\mathcal C} (G)| \le p^a + 1$ for some integer $a \ge 1$, then $|G_3| \le p^{2a}$.
\end{theorem}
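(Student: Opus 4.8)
The plan is to read the desired bound off directly from the counting result in Lemma \ref{countthree}, so that the whole argument reduces to elementary arithmetic with powers of $p$. First I would write $|G_3| = p^m$ for a positive integer $m$. Since $C_G(G') = G'$ is assumed, the hypothesis of Lemma \ref{countthree} is satisfied, and that lemma supplies a lower bound for $|{\mathcal C}(G)|$ depending only on the parity of $m$. The strategy is then to combine this lower bound with the hypothesis $|{\mathcal C}(G)| \le p^a + 1$ and solve for $m$.

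I would split into the two parity cases. If $m = 2l$ is even, Lemma \ref{countthree} gives $p^l + 1 \le |{\mathcal C}(G)|$, and combining this with $|{\mathcal C}(G)| \le p^a + 1$ yields $p^l + 1 \le p^a + 1$, hence $p^l \le p^a$, so $l \le a$ and therefore $m = 2l \le 2a$. If instead $m = 2l + 1$ is odd, Lemma \ref{countthree} gives the strict inequality $p^l + 1 < |{\mathcal C}(G)| \le p^a + 1$, whence $p^l < p^a$, so $l < a$, i.e. $l \le a - 1$; this forces $m = 2l + 1 \le 2(a-1) + 1 = 2a - 1 \le 2a$.

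In either case I obtain $m \le 2a$, so that $|G_3| = p^m \le p^{2a}$, which is exactly the conclusion of the theorem. The only point requiring any care is the odd case, where I must exploit the strictness of the inequality in Lemma \ref{countthree} (together with the fact that $x \mapsto p^x$ is strictly increasing on the integers, since $p \ge 2$) to pass from $p^l < p^a$ to $l \le a - 1$. There is no genuine obstacle here: all of the substantive work has already been carried out in Lemmas \ref{counttwo} and \ref{countthree}, and this final step merely converts that counting information into the stated bound on $|G_3|$.
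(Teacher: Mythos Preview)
Your proposal is correct and follows essentially the same argument as the paper: write $|G_3|=p^m$, split on the parity of $m$, invoke Lemma \ref{countthree} for the appropriate lower bound on $|{\mathcal C}(G)|$, and compare with $p^a+1$ to deduce $m\le 2a$. The only cosmetic difference is that the paper also notes $m\ge 2$ via Lemma \ref{fixtwo}, but this observation is not actually used in the bound.
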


\begin{proof}
Let $|G_3| = p^m$.  In light of Lemma \ref{fixtwo}, we know that $m \ge 2$.  Suppose first that $m = 2l$.  By Lemma \ref{countthree}, we have $p^l + 1 \le |{\mathcal C} (G)|$.  This implies that $p^l + 1 \le p^a + 1$ and so $l \le a$.  This yields $m \le 2a$ as desired.  Now suppose that $m = 2l + 1$.  By Lemma \ref{countthree}, we have $p^l + 1 < |{\mathcal C} (G)|$.  This implies that $p^l + 1 < p^a + 1$, and so, $l < a$.  We obtain $l \le a-1$, and so, $m = 2l + 1 \le 2(a-1) + 1 = 2a - 1 < 2a$ as desired.
\end{proof}

We now prove Theorem \ref{mainfive}.

\begin{proof}[Proof of Theorem \ref{mainfive}]
We know that ${\mathcal C} (G) \subseteq {\mathcal A}^* (G)$.  Thus, if $|{\mathcal A}^* (G)| = 1$ or $2$, then $|{\mathcal C} (G)| \le 2 < p$, and so Lemma \ref{eleven} will imply that $G' < C_G (G')$.  This yields conclusion (1).  Now, suppose that $|{\mathcal A}^* (G)| = p^a + 1$, and thus, $|{\mathcal C} (G)| \le p^a + 1$.  If $G' < C_G (G')$, then conclusion (2) holds and we are done.  Thus, we may assume that $G' = C_G (G')$. This implies that we are in the situation of Theorem \ref{countfour}, and conclusion (2) follows from that theorem.
\end{proof}

\section {$G' = C_G (G')$}

We conclude by gathering some facts about $G$ when $G$ is a Camina $p$-group of nilpotence class $3$ that satisfies $G' = C_G (G')$.  In particular, we will draw some conclusions when $|G_3| \ge |G':G_3|$.  We begin by showing that elements in ${\mathcal A} (G)$ do not contain the centralizers of elements in $G' \setminus G_3$.

\begin{lemma}\label{twelve}
Let $G$ be a Camina $p$-group of nilpotence class $3$.  If there exists $g \in G' \setminus G_3$ and $A \in {\mathcal A} (G)$ such that $A \le C_G (g)$, then $A = C_G (G')$.
\end{lemma}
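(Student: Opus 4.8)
The plan is to identify $A$ with $C_G(G')$ by exploiting the direct‑sum decomposition in Lemma \ref{three}. First I would record the immediate consequences of the hypothesis: since every member of $\mathcal A(G)$ contains $G'$, we have $G' \le A \le C_G(g)$, so $[G',g] = 1$ and $g \in Z(G')$; and since $g \notin G_3 = Z(G)$, the subgroup $C_G(g)$ is proper in $G$. The engine of the whole argument is the observation that $C_G(g)$ cannot contain two distinct members of $\mathcal A(G)$ one of which lies in $\mathcal A^*(G)$: if $P \in \mathcal A^*(G)$ and $Q \in \mathcal A(G)$ with $P \ne Q$ were both contained in $C_G(g)$, then Lemma \ref{three} would give $G = PQ \le C_G(g)$, forcing $g \in Z(G) = G_3$, a contradiction.

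With this in hand I would split on whether $C_G(G') > G'$. Suppose first that $C_G(G') > G'$, and set $C = C_G(G')$. By Lemma \ref{four}, together with the inclusion $\mathcal C(G) \subseteq \mathcal A^*(G)$, $C$ is the unique member of $\mathcal C(G)$ and lies in $\mathcal A^*(G)$; moreover $C \le C_G(g)$ because $C$ centralizes all of $G'$ and in particular $g$. If $A \ne C$, the engine above (applied with $P = C$ and $Q = A$) yields $C_G(g) = G$, a contradiction. Hence $A = C = C_G(G')$, as desired.

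It remains to rule out the possibility $C_G(G') = G'$, for in that case the asserted conclusion $A = C_G(G') = G'$ is impossible (as $A$ properly contains $G'$), so one must instead show that no such pair $g, A$ can exist; equivalently, that for every $A \in \mathcal A(G)$ one has $C_{G'}(A) = G_3$. Writing $A = A(a)$ with $a \in A \setminus G'$, the ultraspecial structure of $G/G_3$ (Lemma \ref{basic1}) forces $[a,G]G_3 = G'$, so I can write $gG_3 = [a,y]G_3$ for some $y \in G$; note $y \notin A$ because $[a,y] \notin G_3$, and hence $B := A(y)$ is a member of $\mathcal A(G)$ distinct from $A$. The aim is to show $B \le C_G(g)$ and that one of $A, B$ lies in $\mathcal A^*(G)$, after which the engine again produces $G = AB \le C_G(g)$ and the contradiction $g \in G_3$. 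Using $g \equiv [a,y]$ modulo the central subgroup $G_3$, the containment $B \le C_G(g)$ reduces, via the class‑$3$ Jacobi identity $[[a,y],z][[y,z],a][[z,a],y]=1$ and the fact that $[y,z] \in G_3$ for $z \in B = A(y)$, to the claim that $y$ centralizes $[z,a]$ for every $z \in B$.

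The main obstacle is precisely this last case $C_G(G') = G'$. The difficulty is that the hypothesis $C_{G'}(A) > G_3$ couples the two layers of commutation in $G$: the ultraspecial alternating form $(xG',yG') \mapsto [x,y]G_3$ on $G/G'$ with values in $G'/G_3$, and the alternating trilinear form $(xG',yG',zG') \mapsto [[x,y],z]$ with values in $G_3$. Concretely, $C_G(g)/G'$ is the kernel of $zG' \mapsto [g,z] = [[a,y],z]$, while $A/G' = C_{G/G_3}(aG_3)$ is the radical of $[a,\,\cdot\,]$ in the first form, and one must show that this inclusion of radicals, equivalently the existence of the nonzero vector $gG_3$ annihilated by $A$, can occur only when $A = C_G(G')$. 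Carrying out this comparison of the bilinear and trilinear data, and in particular verifying $B \le C_G(g)$ together with the $\mathcal A^*$‑membership, is the technical crux; everything else is bookkeeping built on Lemma \ref{three}.
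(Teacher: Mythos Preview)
Your engine is exactly right and is the heart of the paper's argument too: if $P \in \mathcal A^*(G)$ and $Q \in \mathcal A(G)$ with $P \ne Q$ both lie inside a proper subgroup containing $G'$, then Lemma~\ref{three} gives $G = PQ$, a contradiction. The gap is in your Case~2, $C_G(G') = G'$. You never actually establish either that $B = A(y) \le C_G(g)$ or that one of $A,B$ lies in $\mathcal A^*(G)$; you flag this yourself as the ``technical crux'' and leave it open. In fact your reduction of $B \le C_G(g)$ to ``$y$ centralizes $[z,a]$ for all $z \in B$'' has no evident reason to hold when $C_G(G') = G'$, since $[z,a]$ ranges over elements of $G'$ and $y \notin G'$; and nothing in sight forces either $A$ or $B$ into $\mathcal A^*(G)$. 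So the proposal, as it stands, does not prove the lemma.

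The paper sidesteps the case split entirely by choosing a different $\mathcal A^*$-partner for $A$. Instead of $C_G(G')$ (which may equal $G'$ and hence fail to lie in $\mathcal A(G)$ at all), fix any $N \le G_3$ with $|G_3:N| = p$ and use $C(N)$, which \emph{always} lies in $\mathcal C(G) \subseteq \mathcal A^*(G)$. Work modulo $N$: set $C/N = C_{G/N}(gN)$. Then $A \le C_G(g) \le C$, and $C(N) \le C$ because $C(N)/N$ centralizes all of $G'/N \ni gN$. Since $gN \notin G_3/N = Z(G/N)$ (Lemma~\ref{basic2} applied to $G/N$), $C$ is proper in $G$, so your engine forces $A = C(N)$. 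As $N$ was arbitrary, $\mathcal C(G) = \{A\}$, and Lemma~\ref{four} then yields $A = C_G(G')$---and, incidentally, $C_G(G') > G'$, so your troublesome Case~2 simply never occurs.
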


\begin{proof}
Suppose $N \le G_3$ satisfies $|G_3:N| = p$.  Let $C/N = C_{G/N} (gN)$.  Observe that $A \le C_G (g) \le C$.  On the other hand, since $gN \in G'/N$, we must have that $C/N \ge C_{G/N} (G'/N) = C(N)/N$; so $C(N) \le C$.  We know from Lemma \ref{three} that if $A \ne C(N)$, then $G = A C(N)$.  Since $A C(N) \le C$, this would imply that $C = G$ which is a contradiction since $Z (G/N) = G_3/N$.  It follows that $A = C(N)$ for all possible $N$ with $N \le G_3$ and $|G_3:N| = p$.  In particular, ${\mathcal C} (G) = \{ A \}$.  By Lemma \ref{four}, we conclude that $A = C_G (G')$.
\end{proof}

\begin{corollary}\label{thirteen}
Let $G$ be a Camina $p$-group of nilpotence class $3$.  If $A \in {\mathcal A} (G)$ satisfies $A \ne C_G (G')$, then $A \not\le C_G (g)$ for all $g \in G' \setminus G_3$.  In particular, if $C_G (G') = G'$, then $A \not \le C_G (g)$ for all $A \in {\mathcal A} (G)$ and for all $g \in G' \setminus G_3$.
\end{corollary}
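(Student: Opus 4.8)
The plan is to recognize that the first assertion is simply the contrapositive of Lemma \ref{twelve}, and then to derive the ``in particular'' clause by verifying that its hypothesis forces the condition $A \ne C_G(G')$ to hold for every $A \in {\mathcal A}(G)$. First I would fix $A \in {\mathcal A}(G)$ with $A \ne C_G(G')$. Lemma \ref{twelve} states that whenever some $A \in {\mathcal A}(G)$ is contained in $C_G(g)$ for an element $g \in G' \setminus G_3$, one must have $A = C_G(G')$. Reading this contrapositively, an $A \in {\mathcal A}(G)$ that is not equal to $C_G(G')$ cannot be contained in $C_G(g)$ for any $g \in G' \setminus G_3$. This is exactly the first conclusion, so it requires no additional argument beyond invoking Lemma \ref{twelve}.

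For the second statement, I would show that the hypothesis $C_G(G') = G'$ guarantees $A \ne C_G(G')$ for every $A \in {\mathcal A}(G)$, so that the first part applies uniformly. The key observation is that each $A \in {\mathcal A}(G)$ properly contains $G'$: by Corollary \ref{basic5} we have $|A:G'| = |G':G_3|$, and since $G$ has nilpotence class $3$ we have $G_3 < G'$, whence $|G':G_3| > 1$ and therefore $A > G'$. Consequently, when $C_G(G') = G'$ we obtain $A > G' = C_G(G')$, so in particular $A \ne C_G(G')$ for every $A \in {\mathcal A}(G)$. Applying the first part of the corollary then yields $A \not\le C_G(g)$ for all $A \in {\mathcal A}(G)$ and all $g \in G' \setminus G_3$, as desired.

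There is no real obstacle in this corollary: it is a purely formal consequence of Lemma \ref{twelve} combined with the elementary count $|A:G'| = |G':G_3| > 1$, which shows that subgroups in ${\mathcal A}(G)$ strictly contain $G'$. The only point requiring any care is to confirm that no $A \in {\mathcal A}(G)$ can coincide with $G'$, and this is immediate from Corollary \ref{basic5}.
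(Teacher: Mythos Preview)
Your proposal is correct and follows exactly the paper's approach: the paper's proof is the single sentence ``This is the contrapositive of Lemma \ref{twelve}.'' You have simply made explicit the (obvious) reason the ``in particular'' clause follows, namely that every $A \in {\mathcal A}(G)$ properly contains $G'$ by Corollary \ref{basic5}, so under the hypothesis $C_G(G') = G'$ no $A$ can equal $C_G(G')$.
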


\begin{proof}
This is the contrapositive of Lemma \ref{twelve}.
\end{proof}

We now characterize the subgroups in ${\mathcal A}^* (G) \setminus {\mathcal C} (G)$.  In this first lemma, we handle the case where $|G_3| = p$.

\begin{lemma}\label{fourteen}
Let $G$ be a Camina $p$-group of nilpotence class $3$, and suppose that $|G_3| = p$.  If $A \in {\mathcal A}^* (G)$ and $A \ne C_G (G')$, then $A$ is an extraspecial group.
\end{lemma}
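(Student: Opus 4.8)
The plan is to check the three defining properties of an extraspecial group for $A$, namely that $A' = Z(A) = \Phi(A)$ with this common subgroup having order $p$. Since $A \in {\mathcal A}^* (G)$, Lemma~\ref{nine} gives $A' = G_3$, and the hypothesis $|G_3| = p$ makes $A'$ a subgroup of order $p$ (in particular $A' \ne 1$, so $A$ is nonabelian). Moreover $G_3 = Z(G)$ by Lemma~\ref{basic2}, so $A' = G_3 \le Z(A)$ and $A$ has nilpotence class $2$. It therefore remains to prove $Z(A) = G_3$ and $\Phi(A) = G_3$.

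I would establish $Z(A) = G_3$ first, since I expect this to be the heart of the matter. The inclusion $G_3 \le Z(A)$ is clear. For the reverse, let $z \in Z(A)$ and split according to the partition $A = G_3 \cup (G' \setminus G_3) \cup (A \setminus G')$. If $z \in A \setminus G'$, then Lemma~\ref{two}(1) gives $A = A(z)$, so Lemma~\ref{eight} forces $[z,A] = G_3 \ne 1$, contradicting $z \in Z(A)$. If $z \in G' \setminus G_3$, then $A \le C_G(z)$, contradicting Corollary~\ref{thirteen}, which applies precisely because $A \ne C_G(G')$. Hence $z \in G_3$, so $Z(A) = G_3$. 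This is exactly the step where the hypothesis $A \ne C_G(G')$ enters, and it is what separates the present case from $C_G(G')$ itself, whose center is $G'$ rather than $G_3$ by Theorem~\ref{ten}.

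It then remains to compute $\Phi(A) = A' A^p$. Because $A$ has class $2$, commutators are central and the map $x \mapsto [x,y]$ is multiplicative in $x$, so $[x^p, y] = [x,y]^p$ for all $x, y \in A$; as $[x,y] \in A' = G_3$ has order dividing $p$, this commutator is trivial. Thus $x^p$ centralizes $A$, giving $x^p \in Z(A) = G_3$, so $A^p \le G_3$ and $\Phi(A) = A' A^p = G_3$. Collecting the three equalities, $Z(A) = A' = \Phi(A) = G_3$ has order $p$, and $A$ is extraspecial. The class-$2$ power computation here is routine once the center has been identified, so the only genuinely delicate point is the centralizer argument of the middle paragraph.
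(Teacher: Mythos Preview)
Your argument is correct and follows essentially the same line as the paper's: both use Lemma~\ref{nine} for $A'=G_3$, Lemma~\ref{two}(1) together with Lemma~\ref{eight} to exclude $Z(A)\cap(A\setminus G')$, and Corollary~\ref{thirteen} to exclude $Z(A)\cap(G'\setminus G_3)$. The only difference is in the Frattini step: the paper simply quotes that $G/G_3$ has exponent $p$ (Mann's result) to get $A^p\le G_3$, whereas you give a self-contained class-$2$ power computation reaching the same conclusion; either route is fine.
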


\begin{proof}
We know from Lemma \ref{nine} that $A' = G_3$.  We know that if $a \in A \setminus G'$, then $A = A (a)$.  Applying Lemma \ref{eight}, we have $[a,A] = G_3$, and so, $Z(A) \le G'$.  In light of Corollary \ref{thirteen}, we have that if $g \in G' \setminus G_3$, then $A \not\le C_G (g)$.  This implies that $g \not\in Z(A)$, and we deduce that $Z(A) \le G_3$.  Since $G_3 = Z(G)$, this implies $G_3 = Z(A)$.  Also, $A/G_3$ is a subgroup of $G/G_3$ which has exponent $p$, and we have $A' = Z(A) = \Phi (A) = G_3$.  Because $|G_3| = p$, we conclude that $A$ is extra-special.
\end{proof}

We now have the general case.

\begin{corollary}\label{fifteen}
Let $G$ be a Camina $p$-group of nilpotence class $3$.  If $A \in {\mathcal A}^* (G) \setminus {\mathcal C} (G)$, then $A$ is a semi-extraspecial group.
\end{corollary}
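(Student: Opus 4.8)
The plan is to reduce to the case $|G_3| = p$ settled in Lemma \ref{fourteen} by passing to the quotients $G/N$, where $N$ ranges over the subgroups of index $p$ in $G_3$. Before doing so I would record that $A \neq C_G(G')$: if $C_G(G') > G'$ then Lemma \ref{four} gives ${\mathcal C}(G) = \{C_G(G')\}$, so $A \notin {\mathcal C}(G)$ forces $A \neq C_G(G')$, while if $C_G(G') = G'$ then $A \neq G'$ because every member of ${\mathcal A}(G)$ properly contains $G'$.

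The first substantive step is to show $Z(A) = G_3$, exactly as in the opening of the proof of Lemma \ref{fourteen}, whose argument never uses $|G_3| = p$. Lemma \ref{nine} gives $A' = G_3$. For $a \in A \setminus G'$ we have $A = A(a)$, so Lemma \ref{eight} yields $[a,A] = G_3 \neq 1$; hence no such $a$ is central and $Z(A) \le G'$. For $g \in G' \setminus G_3$, Corollary \ref{thirteen} gives $A \not\le C_G(g)$, so $g \notin Z(A)$, whence $Z(A) \le G_3$. Since $G_3 = Z(G) \le Z(A)$, we conclude $Z(A) = G_3$. Because of this equality, it now suffices, by the definition of semi-extraspecial, to prove that $A/N$ is extraspecial for every $N < G_3$ with $|G_3:N| = p$.

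Fix such an $N$. Since $N \le G_3 < G'$, the quotient $G/N$ is a Camina $p$-group; and as $(G/N)_3 = G_3/N \neq 1$ while $[G_3/N, G/N] = 1$, it has nilpotence class $3$ with $|(G/N)_3| = p$. Using the remarks preceding Lemma \ref{two}, $A/N \in {\mathcal A}(G/N)$, and $A/G_3$ is abelian, so $A/N \in {\mathcal A}^*(G/N)$. Because $A \notin {\mathcal C}(G)$, Lemma \ref{twoa} (3) gives $[G',A] = G_3$, so $N[G',A] = G_3$ is not properly contained in $G_3$; hence Lemma \ref{twoa} (2) shows $A/N \notin {\mathcal C}(G/N)$. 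Finally $C_{G/N}((G/N)') = C(N)/N$, so $A/N = C_{G/N}((G/N)')$ would give $A = C(N) \in {\mathcal C}(G)$, a contradiction; thus $A/N \neq C_{G/N}((G/N)')$. Lemma \ref{fourteen}, applied to $G/N$ and $A/N$, therefore shows $A/N$ is extraspecial, and letting $N$ vary completes the proof.

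I expect the main obstacle to be the bookkeeping in the reduction: one must confirm that, under the quotient map, $A/N$ stays inside ${\mathcal A}^*(G/N)$ but outside ${\mathcal C}(G/N)$ and remains distinct from $C_{G/N}((G/N)')$, which are precisely the hypotheses Lemma \ref{fourteen} needs. Everything else is a faithful translation of the class-$3$, $|G_3| = p$ computation through the family of quotients $G/N$.
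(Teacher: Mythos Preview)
Your proof is correct and follows the same route as the paper: reduce to the case $|G_3|=p$ by passing to the quotients $G/N$ for $N$ of index $p$ in $G_3$, and then invoke Lemma~\ref{fourteen}. The paper's version is much terser and leaves implicit the verifications you spell out (that $Z(A)=G_3$, that $A/N\in{\mathcal A}^*(G/N)$, and that $A/N\neq C_{G/N}((G/N)')$); your step showing $A/N\notin{\mathcal C}(G/N)$ via Lemma~\ref{twoa} is in fact redundant with the direct check that $A/N\neq C(N)/N$, since $|(G/N)_3|=p$ forces ${\mathcal C}(G/N)=\{C(N)/N\}$.
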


\begin{proof}
Let $N \le G_3$ satisfy $|G_3:N| = p$.  Applying Lemma \ref{fourteen} to $G/N$, we see that $A/N$ is an extra-special group, and we conclude that $A$ is semi-extraspecial.
\end{proof}

We also know from \cite{mann} that $G/G_3$ has exponent $p$.  Combining this fact with Theorem 6.1 of \cite{ver}, we deduce that ${\mathcal A}^* (G) = {\mathcal A} (G)$ if and only if $G/G_3$ is isomorphic to the Heisenberg group for $p^n$.

\begin{corollary}\label{sixteen}
Let $G$ be a Camina $p$-group of nilpotence class $3$.  If ${\mathcal C} (G) < {\mathcal A} (G)$, then $|G_3| \le |G:G'|^{1/2}$.
\end{corollary}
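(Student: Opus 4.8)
The plan is to first rewrite the target inequality using $|G:G'|^{1/2} = |G':G_3|$ (Corollary \ref{basic5}), so that the goal becomes $|G_3| \le |G':G_3|$. If $C_G(G') > G'$, then Theorem \ref{ten}(3) already delivers $|G_3| \le |G':G_3|^{1/2} \le |G':G_3|$, so the conclusion holds. Hence I may assume $C_G(G') = G'$ for the remainder. The hypothesis ${\mathcal C}(G) \subsetneq {\mathcal A}(G)$ produces some $B \in {\mathcal A}(G) \setminus {\mathcal C}(G)$, but $B$ need not lie in ${\mathcal A}^*(G)$, so the proof splits according to whether ${\mathcal A}^*(G) = {\mathcal C}(G)$ or not.

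Suppose first that ${\mathcal A}^*(G) \ne {\mathcal C}(G)$, and choose $A \in {\mathcal A}^*(G) \setminus {\mathcal C}(G)$. By Corollary \ref{fifteen}, $A$ is semi-extraspecial, so $|A'|^2 \le |A:A'|$. By Lemma \ref{nine} we have $A' = G_3$, and since $A \in {\mathcal A}(G)$, Corollary \ref{basic5} gives $|A:G_3| = |G:G'|$. Substituting yields $|G_3|^2 \le |G:G'|$, that is, $|G_3| \le |G:G'|^{1/2}$, as desired. (Note this case did not actually use $C_G(G') = G'$.)

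Now suppose ${\mathcal A}^*(G) = {\mathcal C}(G)$. Since ${\mathcal C}(G) \subsetneq {\mathcal A}(G)$, we obtain ${\mathcal A}^*(G) \subsetneq {\mathcal A}(G)$, so by the remark following Corollary \ref{fifteen}, $G/G_3$ is not the Heisenberg group. Verardi's classification then forces $|{\mathcal A}^*(G)| = p^h + 1$ with $h$ a \emph{proper} divisor of $n$: the value $1$ is excluded, for it would give $|{\mathcal C}(G)| = 1$ and hence $C_G(G') > G'$ by Lemma \ref{four}; the value $2$ is excluded, since $|{\mathcal C}(G)| = 2 \le p$ (recall $p$ is odd) would give $C_G(G') > G'$ by Lemma \ref{eleven}; and the exponent $n$ is excluded because $h = n$ would make $G/G_3$ Heisenberg. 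Because $h \mid n$ and $h < n$, we have $h \le n/2$, so $2h \le n$. As $|{\mathcal C}(G)| = p^h + 1$ and $C_G(G') = G'$, Theorem \ref{countfour} (with $a = h$) yields $|G_3| \le p^{2h} \le p^n = |G':G_3| = |G:G'|^{1/2}$.

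The main obstacle is the second case. The hypothesis only supplies a member of ${\mathcal A}(G)$ lying outside ${\mathcal C}(G)$, and such a member may fail to be in ${\mathcal A}^*(G)$, so it cannot be fed directly into Corollary \ref{fifteen}. The resolution is to observe that the equality ${\mathcal A}^*(G) = {\mathcal C}(G)$, together with ${\mathcal C}(G) \subsetneq {\mathcal A}(G)$, pushes $G/G_3$ away from the Heisenberg group, and then to convert the strict divisibility $h < n$ into $2h \le n$ so that the counting estimate of Theorem \ref{countfour} exactly closes the gap.
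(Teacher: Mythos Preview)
Your proof is correct and uses essentially the same ingredients as the paper's proof: the semi-extraspecial bound from Corollary~\ref{fifteen} when ${\mathcal A}^*(G)\setminus{\mathcal C}(G)\ne\emptyset$, and the counting bound (Theorem~\ref{countfour}) together with Verardi's restriction $h\le n/2$ when ${\mathcal A}^*(G)\subsetneq{\mathcal A}(G)$. The only difference is organizational: the paper splits first on whether ${\mathcal A}^*(G)<{\mathcal A}(G)$ or ${\mathcal A}^*(G)={\mathcal A}(G)$ and invokes Theorems~\ref{mainfour} and~\ref{mainfive} as packaged statements, whereas you split first on $C_G(G')$ and then on ${\mathcal A}^*(G)$ versus ${\mathcal C}(G)$, citing the underlying Lemmas~\ref{four}, \ref{eleven} and Theorem~\ref{countfour} directly.
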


\begin{proof}
Recall that ${\mathcal C} (G) \subseteq {\mathcal A}^* (G)$, and $|{\mathcal A}^* (G)|$ is either $1$, $2$, or $1 + p^a$ where $a$ divides $n$ and $n$ satisfies $|G:G'| = p^{2n}$.  Thus, if ${\mathcal A}^* (G) < {\mathcal A} (G)$, then the possible values of $a$ are at most $n/2$.  Applying Theorems \ref{mainfour} and \ref{mainfive}, we see that either $|G_3| \le |G:G'|^{1/4}$ or $|G_3| \le p^{2a} \le p^n = |G:G'|^{1/2}$. Thus, we may assume that ${\mathcal A}^* (G) = {\mathcal A} (G)$, and there exists $A \in {\mathcal A}^* (G) \setminus {\mathcal C} (G)$.  By Lemma \ref{fifteen}, we know that $A$ is semi-extraspecial, and from \cite{ver}, it follows that $|G_3| = |A'| \le |G:G'|^{1/2}$.
\end{proof}

Finally, we make an observation when $G$ is a Camina $p$-group of nilpotence class $3$ and $|G_3| > |G:G'|^{1/2}$.

\begin{corollary}\label{seventeen}
Let $G$ be a Camina $p$-group of nilpotence class $3$.  If $|G:G'| = p^{2n}$ and $|G_3| > p^n$, then ${\mathcal C} (G) = {\mathcal A}^* (G) = {\mathcal A} (G)$ and $|{\mathcal C} (G)| = p^n + 1$.
\end{corollary}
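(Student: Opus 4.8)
The plan is to derive the whole statement from two ingredients already established in the excerpt: Corollary~\ref{sixteen}, which controls exactly when ${\mathcal C}(G)$ fails to fill out ${\mathcal A}(G)$, and the characterization recorded just before Corollary~\ref{sixteen}, namely that ${\mathcal A}^*(G) = {\mathcal A}(G)$ holds precisely when $G/G_3$ is the Heisenberg group for $p^n$. The hypothesis $|G_3| > p^n = |G:G'|^{1/2}$ is exactly the negation of the conclusion of Corollary~\ref{sixteen}, so the first move is to run that corollary in contrapositive form, and the rest is a counting bookkeeping.

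First I would record the inclusions ${\mathcal C}(G) \subseteq {\mathcal A}^*(G) \subseteq {\mathcal A}(G)$, which always hold. Corollary~\ref{sixteen} says that if ${\mathcal C}(G) < {\mathcal A}(G)$ then $|G_3| \le |G:G'|^{1/2}$; since our hypothesis gives $|G_3| > |G:G'|^{1/2}$, the contrapositive forbids ${\mathcal C}(G)$ from being a proper subset of ${\mathcal A}(G)$. Hence ${\mathcal C}(G) = {\mathcal A}(G)$, and the sandwich of inclusions collapses to ${\mathcal C}(G) = {\mathcal A}^*(G) = {\mathcal A}(G)$. This already yields the first assertion of the corollary.

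Next, from ${\mathcal A}^*(G) = {\mathcal A}(G)$ I would invoke the Heisenberg characterization to conclude that $G/G_3$ is isomorphic to the Heisenberg group for $p^n$. It then remains to count. Recall that $A \mapsto A/G_3$ is a bijection of ${\mathcal A}^*(G)$ onto ${\mathcal A}^*(G/G_3)$, and that ${\mathcal A}^*(G/G_3)$ is precisely the set of abelian subgroups of $G/G_3$ of order $|G:G'| = p^{2n}$. By Verardi's classification (Theorem 5.10 of \cite{ver}, recorded in the summary at the end of Section~\ref{prelim}), the Heisenberg group for $p^n$ has exactly $p^n + 1$ such abelian subgroups. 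Therefore $|{\mathcal A}^*(G)| = p^n + 1$, and since ${\mathcal C}(G) = {\mathcal A}^*(G)$ we conclude $|{\mathcal C}(G)| = p^n + 1$, finishing the proof.

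The deduction of the three-way equality is the routine part; no new group theory is needed there, only the contrapositive of Corollary~\ref{sixteen} and the chain of inclusions. I expect the counting step to be the only place requiring care: one must translate the equality ${\mathcal A}^*(G) = {\mathcal A}(G)$ into the structural statement that $G/G_3$ is Heisenberg, and then correctly identify $|{\mathcal C}(G)|$, the number of order-$p^{2n}$ abelian subgroups of $G/G_3$, and $p^n + 1$ as one and the same quantity. This is a modest bookkeeping obstacle rather than a genuine difficulty, since each of the needed facts is already quoted in the preliminaries.
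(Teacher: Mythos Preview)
Your proof is correct and follows essentially the same outline as the paper's: both use the contrapositive of Corollary~\ref{sixteen} together with the chain ${\mathcal C}(G) \subseteq {\mathcal A}^*(G) \subseteq {\mathcal A}(G)$ to obtain the three-way equality, and both finish by identifying $G/G_3$ as the Heisenberg group and reading off the count $p^n+1$.

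The one point of divergence is how $G/G_3$ is shown to be Heisenberg. You deduce it from the characterization recorded just before Corollary~\ref{sixteen} (namely, ${\mathcal A}^*(G) = {\mathcal A}(G)$ if and only if $G/G_3$ is Heisenberg), which is the most direct route given what the paper has already assembled. The paper instead invokes Theorem~1.3(vi) of \cite{MaSc}: it passes to a quotient $H = G/N$ with $|G_3:N| = p^n$, so that $|H_3| = |H:H'|^{1/2}$, and then applies the Mann--Scoppola result to conclude $G/G_3 \cong H/H_3$ is Heisenberg. Your argument is cleaner here since it avoids introducing the auxiliary quotient and an external citation; the paper's route has the mild advantage of not relying on the full biconditional (which in turn rests on Mann's exponent result and Verardi's Theorem~6.1), but both are perfectly valid.
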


\begin{proof}
We know by Corollary \ref{sixteen} that ${\mathcal C} (G) = {\mathcal A} (G)$, and since ${\mathcal C} (G) \subseteq {\mathcal A}^* (G) \subseteq {\mathcal A} (G)$, we have the first conclusion.  In Theorem 1.3 (vi) of \cite{MaSc}, it is proved that if $H$ is a Camina $p$-group of nilpotence class $3$ and $|H_3| = |H:H'|^{1/2}$, then $H/H_3$ is isomorphic to the Heisenberg group.  We can find a subgroup $N$ in $G_3$ so that $|G_3:N| = p^n$.  Then $H \cong G/N$ is a Camina $p$-group of nilpotence class $3$ and $|H_3| = |H:H'|^{1/2}$.   It follows that $G/G_3 \cong H/H_3$ is isomorphic to the Heisenberg group.  Since $G/G_3$ is isomorphic to the Heisenberg group, we know that $|{\mathcal A} (G)| = p^n + 1$, and this yields the second conclusion.
\end{proof}

\section{Final remarks}

As we have mentioned before, Camina groups having nilpotence class $3$ have been constructed in Theorem 6.3 of \cite{some}, Theorem 5.1 of \cite{more}, Theorem 2.12 of \cite{MaSc}, and Section 4 of \cite{DaSc}.  However, in all of these cases if $G$ is the nilpotence class $3$ Camina group constructed, then $G/Z(G)$ is a Heisenberg group.  It makes sense to ask whether there exist Camina groups $G$ of nilpotence class $3$ where $G/G_3$ is not a Heisenberg group, since if no such groups exist then Theorem \ref{mainfive} would be pointless.  Thus, in the appendix, we present code from the computer algebra system Magma \cite{magma} for two Camina $3$-groups $G$ of nilpotence class $3$ one of which has order $3^{13}$ and the other has order $3^{14}$.  In both cases $|{\mathcal A}^* (G)| = 1$.  We note that $G/G_3$ is not isomorphic in the two examples since the first example has $|{\mathcal A} (G)| = 3241$ and the second example has $|{\mathcal A} (G)| = 2998$.

This raises the question of which groups can occur as $H = G/G_3$ when $G$ is a Camina group of nilpotence class $3$.  By Lemma \ref{basic1}, we know that $H$ must be ultraspecial.  In light of the comments following Lemma \ref{two}, we must have $|{\mathcal A}^* (H)| \ge 1$.  Finally, as mentioned before, Mann proved in Theorem 1 (b) of \cite{mann} that $H$ has exponent $p$.  Thus, it seems reasonable to ask what other conditions (if any) are necessary for $H$.  At this time, we do not have a satisfactory answer to this question.

\bigskip
\bigskip

\center{Appendix}

\bigskip

\begin{verbatim}
p1 := PolycyclicGroup<x1,x2,x3,x4,x5,x6,x7,x8,x9,x10,x11,
x12,x13| x1^3,x2^3,x3^3,x4^3,x5^3,x6^3,x7^3,x8^3,x9^3,
x10^3,x11^3,x12^3,x13^3, x2^x1=x2*x13^2, x3^x1=x3*x13, 
x3^x2=x3*x13, x4^x1=x4*x13,x5^x1=x5*x9*x13^2, 
x5^x2=x5*x10, x5^x3=x5*x11*x12^2*x13, 
x5^x4=x5*x11^2*x13^2, x6^x1=x6*x10*x13^2, x6^x2=x6*x9*x10, 
x6^x3=x6*x11^2, x6^x4=x6*x12^2, x6^x5=x6*x13, 
x7^x1=x7*x11*x13, x7^x2=x7*x12*x13^2, 
x7^x3=x7*x9*x11^2*x12^2*x13,x7^x4=x7*x10*x11^2*x12*x13^2, 
x7^x5=x7*x13, x7^x6=x7*x9*x10*x13, x8^x1=x8*x12*x13,
x8^x2=x8*x11*x12*x13, x8^x3=x8*x10*x11^2*x12, 
x8^x4=x8*x9*x10*x11, x8^x5=x8*x13, x8^x6=x8*x9*x11, 
x8^x7=x8*x10*x12, x9^x5=x9*x13, x9^x6=x9*x13^2, 
x9^x7=x9*x13^2,x9^x8=x9*x13, x10^x5=x10*x13^2, 
x10^x7=x10*x13, x11^x5=x11*x13^2, x11^x6=x11*x13, 
x11^x7=x11*x13^2, x11^x8=x11*x13, x12^x5=x12*x13, 
x12^x7=x12*x13>;

p2 := PolycyclicGroup<x1,x2,x3,x4,x5,x6,x7,x8,x9,x10,x11,
x12,x13,x14| x1^3,x2^3,x3^3,x4^3,x5^3,x6^3,x7^3,x8^3,
x9^3,x10^3,x11^3,x12^3,x13^3,x14^3,x2^x1=x2*x14^2, 
x3^x1=x3*x14, x3^x2=x3*x13, x4^x1=x4*x13^2, 
x4^x2=x4*x13*x14^2, x4^x3=x4*x13^2*x14^2, x5^x1=x5*x9*x13, 
x5^x2=x5*x10*x13^2, x5^x3=x5*x11*x12^2*x13^2*x14^2, 
x5^x4=x5*x11^2*x13*x14,x6^x1=x6*x10, 
x6^x2=x6*x9*x10*x13*x14^2, x6^x3=x6*x11^2*x13^2*x14, 
x6^x4=x6*x12^2, x6^x5=x6*x14, x7^x1=x7*x11*x13^2, 
x7^x2=x7*x12*x13^2*x14^2, 
x7^x3=x7*x9*x11^2*x12^2*x13*x14^2, 
x7^x4=x7*x10*x11^2*x12*x13*x14, x7^x5=x7*x13^2*x14^2, 
x7^x6=x7*x13, x8^x1=x8*x12, x8^x2=x8*x11*x12, 
x8^x3=x8*x10*x11^2*x12, x8^x4=x8*x9*x10*x11, 
x8^x5=x8*x14, x8^x6=x8*x13^2*x14^2, x8^x7=x8*x9, 
x9^x7=x9*x13^2, x9^x8=x9*x13*x14^2, x10^x7=x10*x13*x14^2, 
x10^x8=x10*x14^2, x11^x5=x11*x13^2, x11^x6=x11*x13*x14^2, 
x11^x7=x11*x13*x14, x11^x8=x11*x13, x12^x5=x12*x13*x14^2, 
x12^x6=x12*x14^2, x12^x7=x12*x13, x12^x8=x12*x13^2*x14>;
\end{verbatim}

\bigskip
\bigskip

\end{document}